\def\Omega{\varOmega}
\def\Delta{\varDelta}
\def\lj{\llbracket}
\def\rj{\rrbracket}
\NewDocumentCommand{\dgal}{sO{}m}{%
  \IfBooleanTF{#1}
    {\dgalext{#3}}
    {\dgalx[#2]{#3}}%
}
\NewDocumentCommand{\dgalext}{m}{%
  \sbox0{%
    \mathsurround=0pt 
    $\left\{\vphantom{#1}\right.\kern-\nulldelimiterspace$%
  }%
  \sbox2{\{}%
  \ifdim\ht0=\ht2
    \{\kern-.45\wd2 \{#1\}\kern-.45\wd2 \}%
  \else
  \fi
}
\NewDocumentCommand{\dgalx}{om}{%
  \sbox0{\mathsurround=0pt$#1\{$}%
  \sbox2{\{}%
  \ifdim\ht0=\ht2
    \{\kern-.45\wd2 \{#2\}\kern-.45\wd2 \}%
  \else
    \mathopen{#1\{\kern-.5\wd0 #1\{}
    #2
    \mathclose{#1\}\kern-.5\wd0 #1\}}
  \fi
}
\def\Omega{\varOmega}
\def\Delta{\varDelta}
\def\la{\{\hspace*{-0.4em}\{}
\def\ra{\}\hspace*{-0.4em}\}}
\def\eps{\epsilon}
\def\T{{\mathcal{T}}}
\crefname{hypothesis}{Hypothesis}{Hypotheses}
\title{New $C^0$ interior penalty   method for Monge-Amp\`ere equations}
\author{
Tianyang Chu\thanks{LSEC, Institute of Computational Mathematics and Scientific/Engineering Computing, Academy of Mathematics and Systems Science, Chinese Academy of Sciences, Beijing 100190, China (\email{tchu@lsec.cc.ac.cn}).}
\and Hailong Guo\thanks{School of Mathematics and Statistics, The University of Melbourne, Parkville, VIC 3010, Australia (\email{hailong.guo@unimelb.edu.au}).}
\and Zhimin Zhang\thanks{Department of Mathematics, Wayne State University, Detroit, MI 48202, USA (\email{ag7761@wayne.edu}).}
}
\begin{document}

\maketitle

\begin{abstract}
Monge-Amp\`{e}re equation is a prototype  second-order fully nonlinear  partial differential equation. 
In this paper, we propose a new idea to design and analyze the $C^0$ interior penalty method to approximation the viscosity solution of the   Monge-Amp\`{e}re equation. The new methods is inspired from the discrete Miranda-Talenti estimate.    Based on the vanishing moment representation, we approximate the Monge-Amp\`{e}re equation by the fourth order semi-linear equation with some additional boundary conditions.  We will use the discrete Miranda-Talenti estimates to ensure the well-posedness of the numerical scheme and derive the error estimates. 
\end{abstract}

\begin{keywords}
  Monge-Amp\`{e}re equation, fully nonlinear, Miranda-Telanti estimate, vanishing moment method, viscosity solution
 \end{keywords}

\begin{AMS}
  65N30, 65N25, 65N15
\end{AMS}

\section{Introduction}\label{sec: int}
In this paper, we consider the numerical approximation of the following  fully nonlinear  Monge-Amp\`{e}re equation with Dirichlet boundary condition
\begin{subequations}\label{equ: MA equation}
    \begin{align}
        \det(D^2u) &= f \quad \text{in $\varOmega$}, \\
        u &=g\quad \text{on $\partial\varOmega$},
    \end{align}
\end{subequations}
where $\varOmega \subset \mathbb{R}^d$, $d \in \{2,3\}$ is a bounded polygonal convex domain with boundary $\varGamma = \partial\Omega$, and $\det(D^2u)$ denotes the determinant of the Hessian matrix $D^2u$.

The Monge-Amp\`{e}re equation serves as a prototypical example of fully nonlinear partial differential equations \cite{caffarelli1995fullynonlinear}. It arises in many important applications such as differential geometry \cite{guti2016ma, figalli2017mae}, the reflector design problem \cite{wang1996inversedesign}, and optimal transports \cite{villani2003topicinot, benamou2000cfmot}. For the Monge-Amp\`{e}re equation, the classical solution may not exist on a convex domain even though functions $f$ and $g$ are smooth \cite{gilbarg2001secondorderpde}.  Given the fully nonlinear nature of \eqref{equ: MA equation}, traditional weak solution theories based on variational calculus are not directly applicable. Consequently, alternative solution concepts such as Aleksandrov solutions and viscosity solutions have emerged. The viscosity solution theory of Monge-Amp\`{e}re equation  have been well developed in the last half century. For comprehensive insights into these theories and related developments, readers are referred to the monographs \cite{caffarelli1995fullynonlinear, figalli2017mae, guti2016ma, gilbarg2001secondorderpde} and the references therein.

The development of numerical methods for the Monge-Amp\`{e}re equation falls behind its PDE theory. In 1988, Oliker and Prussner \cite{oliker1988geometric} constructed the first finite difference methods to compute the Aleksandrov solutions. The first practical numerical method for the Monge-Amp\`{e}re equation came 20 years later when Oberman proposed the wide stencil finite difference method in 2008 \cite{oberman2008wide} based on the framework of Barles and Souganidis \cite{bartles1991framework}. Since then, there have been extensive advances in the numerical methods for the Monge-Amp\`{e}re equation. Famous examples include filter schemes \cite{froese2013filterscheme}, $L^2$-projection methods \cite{bohmer2008l2projection, brenner2011c0ip2d, brenner2012c0ip3d, neilan2013quad, adetola2022recovery}, least squares methods \cite{brenner2021convex, dean2004leastsquare}, vanishing moment methods \cite{feng2009mixed, feng2009modified, neilan2010nonconforming, chenfengzhang2021, feng2009vanishing, feng2011conforming}, two-scale finite element methods \cite{nochetto2019twoscalepoint, nochetto2019twoscale}, and et al.  For comprehensive lists of numerical methods for the  the Monge-Amp\`{e}re equation, interested readers are referred to the recent review papers \cite{feng2013siamreview, neilan2017review, neilan2020review}.

The main purpose of our paper is to design a new $C^0$ interior penalty (C0IP) method using the vanishing moment approximation of \eqref{equ: MA equation} \cite{feng2009mixed, feng2009modified, neilan2010nonconforming}. For this end, we approximate equation \eqref{equ: MA equation} by the following fourth-order semi-linear partial differential equation (PDE) with an additional boundary condition:
\begin{subequations}\label{equ: model equations}
    \begin{align}
        -\epsilon \varDelta^2 u^{\epsilon} + \det(D^2u^{\epsilon}) = f& \quad \text{in $\varOmega$}, \\
        u^{\epsilon} =g&\quad \text{on $\partial\varOmega$}, \\
        \varDelta u^{\epsilon} =\eps & \quad \text{on $\partial\varOmega$}.
    \end{align}
\end{subequations}


For the PDE theory of \eqref{equ: model equations}, it has been proven in the two-dimensional case \cite{feng2009vanishing} and in the $d$-dimensional ($d\ge 2)$ radially symmetric case \cite{feng2014convergence} that for any $\eps>0$ and $f > 0$, \eqref{equ: model equations} has a unique convex solution $u^{\eps}$. Moreover, $u^{\eps}$ converges uniformly as $\eps \to 0^{+}$, and we have the following {\it a priori} bounds:
\begin{equation}\label{equ: priori bounds for u^eps}
	\begin{aligned}
		&\|u^{\eps}\|_{H^{j}(\Omega)} = \mathcal{O}(\eps^{\frac{1-j}{2}}) \ (j = 2,\; 3), && \|u^{\eps}\|_{W^{j, \infty}(\Omega)} = \mathcal{O}(\eps^{1-j}) \ (j = 1,\; 2), \\
		&\|\Phi^{\eps}\|_{L^2(\Omega)} = \mathcal{O}(\eps^{-\frac{1}{2}}), && \|\Phi^{\eps}\|_{L^{\infty}(\Omega)} = \mathcal{O}(\eps^{-1}), 
	\end{aligned}
\end{equation}
where $\Phi^{\eps}$ denotes the cofactor matrix of $D^2u^{\eps}$.
Based on these findings, we make the following assumption on the solution $u^{\eps}$:
\begin{assumption}\label{ass:convex assumption}
    For problem \eqref{equ: model equations}, we assume that there exists a unique convex solution $u^{\eps}$ such that $u^{\eps} \in H^{3}(\Omega)\cap W^{2,\infty}(\Omega)$, and satisfies \eqref{equ: priori bounds for u^eps}. 
\end{assumption}

The classical finite element methods for fourth-order elliptic PDEs, such as the $C^1$ conforming finite element method\cite{feng2011conforming}, the Morley nonconforming finite element method\cite{neilan2010nonconforming},  mixed finite element method\cite{feng2009mixed},  and the recovery-based linear finite element method\cite{chenfengzhang2021},  have been utilized to solve the semi-linear problem \eqref{equ: model equations}. Compared to the aforementioned finite element methods, the C0IP method using standard Lagrange elements stands out for its flexibility and ease of implementation, making it a preferred choice for high-order PDE solvers\cite{brenner2005c0ipanalysis,engel2002c0ip}. However, extending C0IP methods to address the model problem \eqref{equ: model equations} is not straightforward. Although some numerical results are presented in \cite{neilan2020review}, the convergence analysis of the C0IP method remains an {\em open problem}, as stated in the review paper \cite{neilan2020review}.

In this paper, we try to answer this open question by proposing  a new C0IP method with erorr analysis. 
Our methodology for designing the new C0IP methods is to use the discrete Miranda-Talenti estimate \cite{neilan2019discrete} as the main analytic tool. The discrete Miranda-Talenti estimate was originally used to develop numerical methods for the Hamilton-Jacobi-Bellman (HJB) equation with Cordes coefficients \cite{neilan2019discrete, smears2013discontinuousnd, smears2014discontinuoushjb}. Although the Monge-Amp\'{e}re equation \eqref{equ: MA equation} can be reformulated as an HJB equation \cite{feng2017ma2hjb, gallistl2023ma2hjb}, the numerical methods for the HJB equation \cite{smears2013discontinuousnd, smears2014discontinuoushjb, neilan2019discrete} including the discrete Miranda-Talenti estimate can be used.
To the best of our knowledge, it is the first time the discrete Miranda-Talenti estimate has been used to design a C0IP method to compute the viscosity solution of \eqref{equ: MA equation}. The discrete Miranda-Talenti estimate not only inspires us to design a new C0IP method for the nonlinear equation with linear jumps only on interior edges and averaging terms but also allows us to establish a discrete Sobolev inequality, which is a key ingredient in the fixed-point argument.

Compared to the existing C0IP method for the Monge-Ampere equation in \cite{brenner2011c0ip2d, brenner2012c0ip3d, neilan2013quad}, the new proposed method has several advantages. First, our penalty only involves linear terms, contrasting with the fact that nonlinear penalties are needed to ensure stability for the methods in \cite{brenner2011c0ip2d, brenner2012c0ip3d, neilan2013quad}. Second, our method is designed to compute the viscosity solution, which requires lower regularity for convergence compared to the existing methods that compute classical solutions and may fail for singular solutions. Third, the weak formulation is  much simpler,  especially in 3D cases.


The remaining sections are organized as follows.  In Section \ref{sec: pre}, we introduce the notations for finite element spaces and present the discrete Miranda-Talenti estimate. Section \ref{sec:lin} is dedicated to designing and analyzing a C0IP formulation for the linearized Monge-Amp\`{e}re equation, leveraging the discrete Miranda-Talenti estimate. In Section \ref{sec:c0ip}, we first formulate the new C0IP method; then, we prove well-posedness and establish error estimates for the discrete formulation using the Bownder fixed-point technique. Section \ref{sec:num} demonstrates the performance of the proposed method through a series of numerical examples. Finally, some conclusions are drawn in Section \ref{sec:con}.

\section{Preliminaries}\label{sec: pre}
The purpose of this section is to provide some background material. It begins by introducing related notations, including finite element spaces and the interpolation operator in subsection \ref{ssec:not}. In the following subsection \ref{ssec:dmt}, we present the discrete Miranda-Talenti estimate and use it to prove a discrete Sobolev inequality.

\subsection{Notations}\label{ssec:not}
Let $|D|$ represent the measure of a measurable set $D$. For $s \geq 0$ and $p \in [1,\infty]$, the Sobolev space $W^{s,p}(D)$ is defined as the standard Sobolev space \cite{evans2010} on the domain $D$. Specifically, when $s=0$ or $p=2$, we have $W^{0,p}(D):=L^p(D)$ and $H^{s}(D):= W^{s,2}(D)$. The norm and semi-norm on $W^{s,p}(D)$ are denoted by $\|\cdot\|_{W^{s,p}(D)}$ and $|\cdot|_{W^{s,p}(D)}$ respectively. Additionally, let $H_0^1(D)$ be the subspace of $H^1(D)$ that comprises functions vanishing on $\partial D$. The inner product on $L^2(D)$ is denoted as $(\cdot,\cdot)_{L^2(D)}$. We introduce the following notation:
\begin{equation*}
V:= H^2(\Omega), \quad V^0:= H^2(\Omega)\cap H_0^1(\Omega), \quad V^g:= \{ v \in V; \,  v|_{\partial\Omega}=g \}.
\end{equation*}

Let $\mathcal{T}_h$ be a quasi-uniform, shape-regular,  and conforming simplicial triangulation of $\Omega$ \cite{brenner2008mathematical, ciarlet2002}. For each $K \in \mathcal{T}_h$, we define $h_{K}:= \text{diam}(K)$ and $h:= \max_{K\in \mathcal{T}_h} h_{K}$. The set of interior edges/faces is denoted by $\mathcal{F}_h^{i}$, and $\mathcal{F}_h^{b}$ denotes the set of boundary edges/faces. Similarly, we define  $h_{F}:= \text{diam}(F)$ for any $F\in \mathcal{F}_h^{i}$. 
The jump of a vector-valued function ${\bm w} \in \mathbb{R}^{d}$ on an interior edge/face $F = \partial K^{+} \cap \partial{K}^{-}$ is defined as:
\begin{equation*}
\lj {\bm w} \rj |_{F} = {\bm w}^{+} \cdot {\bm n}_{+}|_{F} + {\bm w}^{-} \cdot {\bm n}_{-}|_{F},
\end{equation*}
where ${\bm n}_{+}$ and ${\bm n}_{-}$ denote the unit outward normals of $K^{+}$ and $K^{-}$, respectively.
Additionally, the average of a piecewise smooth function $w$ across an interior edge/face $F = \partial K^{+} \cap \partial{K}^{-}$ is given by:
\begin{equation*}
\la w \ra |_{F} = \frac{1}{2} \left( w^{+}|_{F} + w^{-}|_{F} \right).
\end{equation*}

Let $\mathbb{P}_k(D)$ denote the space of polynomials of degree less than or equal to $k$ over the domain $D$. The piecewise polynomial space of degree $k$ associated with $\mathcal{T}_h$ is defined as
$$
\mathbb{P}_k(\Omega;\mathcal{T}_h):= \{ v \in L^2(\Omega): v|_{K} \in \mathbb{P}_k(K) \text{ for all } K \in \mathcal{T}_h \}.
$$
The norm $\|\cdot\|_{L^2(\mathcal{T}_h)}$ is defined as
$$
\|v\|_{L^2(\mathcal{T}_h)}^2 = \sum_{K \in \mathcal{T}_h} \|v\|_{L^2(K)}^2.
$$
Similar notation is used for $\|\cdot\|_{H^2(\mathcal{T}_h)}$. Furthermore, we define the inner product $(\cdot,\cdot)_{L^2(\mathcal{T}_h)}$ for the piecewise smooth function space as
$$
(u,v)_{L^2(\mathcal{T}_h)} = \sum_{K \in \mathcal{T}_h} (u,v)_{L^2(K)}.
$$

The standard Lagrange finite element space on $\mathcal{T}_h$ is defined as
\begin{equation*}
V_h = \mathbb{P}_k(\Omega;\mathcal{T}_h) \cap H^1(\Omega), \quad V_h^0 = V_h \cap H_0^1(\Omega).
\end{equation*}
To simplify notation, the elementwise Hessian matrix (Laplacian) for $v_h \in V_h$ is still denoted by $D^2 v_h$ ($\Delta v_h$).
For any function $v \in C^0(\Omega)$, we denote its interpolation in $V_h$ as $v_I$. 
 For the interpolation $v_I$, we have the following approximation property \cite{brenner2008mathematical, ciarlet2002}:
\begin{equation} \label{equ: approximaiton of interpolation}
	\|v-v_I\|_{L^2(\Omega)} + h\|v-v_I\|_{H^1(\Omega)} + h^2\|v-v_I\|_{H^2(\mathcal{T}_h)} \le C h^{\min\{k+1,s\}}\|v\|_{H^s(\Omega)}. 
\end{equation}

Throughout the paper, the letter $C$ or $c$, with or without subscripts, denotes a generic constant that is independent of $h$, $\epsilon$, and the penalty parameter $\sigma$. The value of this constant may vary and might not be the same at each occurrence.

\subsection{Discrete Miranda-Talenti estimate}\label{ssec:dmt}
In this subsection, we introduce the discrete Mirand-Talenti estimate, which serves as the main fundamental tool in designing and analyzing the new $C^0$ interior penalty finite element method.  Before that, we recall the following Miranda-Talenti estimate 

\begin{theorem}[Miranda-Talenti estimate\cite{gris1985, maps2000}]
Suppose $\Omega \subset \mathbb{R}^d$ is a bounded convex domain. Then, for all $v \in H^2(\Omega) \cap H_0^1(\Omega)$, the following inequality holds:
\begin{equation*}
\|D^2v\|_{L^2(\Omega)} \le \|\Delta v\|_{L^2(\Omega)}.
\end{equation*}
\end{theorem}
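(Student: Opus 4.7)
The plan is to establish the estimate via an integration-by-parts identity that expresses $\|\Delta v\|_{L^2(\Omega)}^2 - \|D^2 v\|_{L^2(\Omega)}^2$ as a boundary integral which is nonnegative when $\Omega$ is convex. Since $\Omega$ is a convex polygonal domain, smooth functions vanishing on $\partial\Omega$ are dense in $H^2(\Omega)\cap H_0^1(\Omega)$, so it suffices to prove the inequality for $v \in C^\infty(\overline{\Omega})\cap H_0^1(\Omega)$ and then pass to the limit.

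For such a $v$, I would expand $(\Delta v)^2 = \sum_{i,j} v_{ii} v_{jj}$ and integrate by parts twice: first shifting one derivative in $x_i$ from $v_{ii}$ onto $v_{jj}$, then shifting one derivative in $x_j$ from $v_{ijj}$ onto $v_j$. The interior terms combine to give $\|D^2 v\|_{L^2(\Omega)}^2$, while the boundary contributions collect into
\begin{equation*}
\int_\Omega (\Delta v)^2\,dx = \int_\Omega |D^2 v|^2\,dx + \int_{\partial\Omega}\!\left[(\Delta v)(\nabla v\cdot n) - (D^2 v\,\nabla v)\cdot n\right] dS.
\end{equation*}
The Dirichlet condition $v|_{\partial\Omega}=0$ forces every tangential derivative of $v$ along $\partial\Omega$ to vanish, so on the smooth part of $\partial\Omega$ one has $\nabla v = (\partial_n v)\,n$, and the boundary integrand simplifies to $(\partial_n v)^2$ multiplied by a purely geometric factor depending only on the boundary.

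The decisive step is to check that this geometric factor is nonnegative under convexity: for smooth $\partial\Omega$ it evaluates (after expanding $\Delta v - n^\top D^2 v\, n$ in terms of tangential curvatures) to a sum of principal curvatures of $\partial\Omega$, which is nonnegative by convexity. For the polygonal domain of interest here, I would approximate $\Omega$ from inside by a nested sequence of smooth convex domains $\Omega_m$ with $\Omega_m \uparrow \Omega$, apply the smooth-boundary estimate on each $\Omega_m$ to a suitable restriction or extension of $v$, and pass to the limit using the $H^2$ regularity of $v$ together with dominated convergence. The main obstacle is this last limiting procedure: one has to choose the approximating domains so that the test function remains controlled in $H^2$ and so that the boundary traces behave well, rather than attempt a direct computation at the polygonal corners where the boundary fails to be smooth and where a naive edgewise integration by parts would require careful bookkeeping of vertex contributions.
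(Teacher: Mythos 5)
The paper does not actually prove this statement: it is quoted as a classical result with citations to Grisvard and to Maz'ya--Shaposhnikova, so there is no in-paper argument to measure yours against. Taken on its own, your sketch is the standard route, and the smooth-domain half is sound: the double integration by parts giving
\begin{equation*}
\int_\Omega (\Delta v)^2\,dx=\int_\Omega |D^2v|^2\,dx+\int_{\partial\Omega}\bigl[(\Delta v)(\partial_n v)-(D^2v\,\nabla v)\cdot n\bigr]\,dS
\end{equation*}
is correct, and on a smooth boundary where $v=0$ the integrand does reduce to $(\partial_n v)^2$ times the sum of the principal curvatures, which convexity makes nonnegative.

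The genuine gap is exactly where you flag it: the passage to the convex polytope. Restricting $v$ to an inner smooth convex approximation $\Omega_m$ destroys the Dirichlet condition --- $v|_{\Omega_m}$ is not in $H_0^1(\Omega_m)$ --- so the smooth-domain inequality cannot be applied to a restriction of $v$; the full identity on $\Omega_m$ then carries extra boundary terms involving the tangential gradient of $v$ and the second fundamental form of $\partial\Omega_m$, and these have no favorable sign. The standard repair (this is how Grisvard closes it) is to transport the equation rather than the function: set $f=\Delta v$, solve $\Delta v_m=f$ in $\Omega_m$ with $v_m=0$ on $\partial\Omega_m$, apply the smooth estimate to $v_m$ to get $\|D^2v_m\|_{L^2(\Omega_m)}\le\|f\|_{L^2(\Omega_m)}\le\|\Delta v\|_{L^2(\Omega)}$ uniformly in $m$, extract a weak limit, and identify it with $v$ by uniqueness of the Dirichlet problem on $\Omega$; weak lower semicontinuity of the norm then gives the inequality. (Alternatively, on a polytope one can argue face by face --- the faces are flat, so the boundary integrand vanishes for functions smooth up to the boundary and zero on it --- and finish with a density argument, which itself needs care near edges and vertices.) Without one of these devices the limiting step, and hence the proof, does not close as written.
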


One crucial ingredient for establishing the discrete Miranda-Talenti estimate is the introduction of an enrichment operator $E_h$. Let $E_h$ denote the enrichment operator defined in \cite{neilan2019discrete}, which maps $v_h \in V^0_h$ to a subspace of $H^2(\Omega)\cap H^1_0(\Omega)$.  With the aid of the enrich operator $E_h$, \cite{neilan2019discrete} proved the following discrete Miranda-Talenti estimate:
\begin{theorem}[Discrete Miranda-Talenti estimates]
Let $\Omega \subset \mathbb{R}^d$ $(d = 2,3)$ be a convex polytope. Then, for any $v_h \in V_h^0$, we have    \begin{equation}\label{equ:discrete MT}
        \|D^2 v_h \|_{L^2(\mathcal{T}_h)} \le \|\Delta v_h \|_{L^2(\mathcal{T}_h)} + C_{\dag} \Bigg(   \sum_{ F \in \mathcal{F}_h^{i}} h_{F}^{-1} \big\|   \lj \nabla v_h \rj    \big\|_{L^2(F)}^2     \Bigg)^{\frac{1}{2}}, 
    \end{equation}
    for some constant $C_{\dag}$ independent of $h$ and $v_h$. 
\end{theorem}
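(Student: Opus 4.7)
The plan is to reduce the discrete estimate to the continuous Miranda--Talenti estimate by decomposing $v_h$ into a smooth part and a remainder controlled by the jumps of $\nabla v_h$. Specifically, for $v_h \in V_h^0$ I would write
\begin{equation*}
v_h = E_h v_h + (v_h - E_h v_h),
\end{equation*}
where $E_h: V_h^0 \to H^2(\Omega) \cap H_0^1(\Omega)$ is the enrichment operator from \cite{neilan2019discrete}. Since $\Omega$ is a bounded convex polytope and $E_h v_h \in H^2(\Omega) \cap H_0^1(\Omega)$, the continuous Miranda--Talenti estimate yields $\|D^2 E_h v_h\|_{L^2(\Omega)} \le \|\Delta E_h v_h\|_{L^2(\Omega)}$, and a triangle inequality then reduces the proof to controlling the piecewise Hessian and Laplacian of the remainder $r_h := v_h - E_h v_h$ on $\mathcal{T}_h$.

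The second step is to invoke the approximation properties of $E_h$ established in \cite{neilan2019discrete}, which I expect to take the form
\begin{equation*}
\sum_{K \in \mathcal{T}_h} \sum_{j=0}^{2} h_K^{2j-4} \|v_h - E_h v_h\|_{H^j(K)}^2 \le C \sum_{F \in \mathcal{F}_h^{i}} h_F^{-1} \big\| \lj \nabla v_h \rj \big\|_{L^2(F)}^2.
\end{equation*}
In particular, taking $j = 2$ bounds $\|D^2 r_h\|_{L^2(\mathcal{T}_h)}$ by the jump seminorm on the right-hand side of \eqref{equ:discrete MT}, and the same bound controls $\|\Delta r_h\|_{L^2(\mathcal{T}_h)} \le \sqrt{d}\, \|D^2 r_h\|_{L^2(\mathcal{T}_h)}$.

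Combining these ingredients, I would estimate
\begin{align*}
\|D^2 v_h\|_{L^2(\mathcal{T}_h)} &\le \|D^2 E_h v_h\|_{L^2(\Omega)} + \|D^2 r_h\|_{L^2(\mathcal{T}_h)} \\
&\le \|\Delta E_h v_h\|_{L^2(\Omega)} + \|D^2 r_h\|_{L^2(\mathcal{T}_h)} \\
&\le \|\Delta v_h\|_{L^2(\mathcal{T}_h)} + \|\Delta r_h\|_{L^2(\mathcal{T}_h)} + \|D^2 r_h\|_{L^2(\mathcal{T}_h)},
\end{align*}
and absorb the last two terms into the jump seminorm via the approximation estimate above, producing the constant $C_\dag$.

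The main obstacle is the second step: constructing the enrichment operator $E_h$ and establishing its jump-controlled approximation property. For Lagrange elements of arbitrary degree $k$, this requires averaging nodal degrees of freedom associated with $H^2$-conforming macro elements (e.g.\ Hsieh--Clough--Tocher in 2D or suitable Worsey--Farin type elements in 3D), and then verifying that the resulting defect is controlled by exactly the jump seminorm appearing on the right-hand side. This is precisely the content of the construction in \cite{neilan2019discrete}, which I would cite directly rather than reproduce, so the remainder of the argument in this paper is essentially a clean assembly of the three inequalities above.
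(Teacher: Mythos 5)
Your proposal is correct and follows essentially the same route as the paper, which does not reproduce a proof but defers entirely to the enrichment-operator argument of \cite{neilan2019discrete}: decompose $v_h = E_h v_h + (v_h - E_h v_h)$, apply the continuous Miranda--Talenti inequality to $E_h v_h \in H^2(\Omega)\cap H_0^1(\Omega)$ on the convex polytope, and absorb the remainder via the jump-controlled approximation property of $E_h$ (Lemma~3 of that reference, which is exactly the estimate you anticipate). The only caveat is that your argument, as written, still rests on citing that approximation lemma rather than proving it, but that is also all the paper does.
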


Noticing that $|E_hv_h|_{H^1(\Omega)} \le C |E_hv_h|_{H^2(\Omega)}$, following the proof of \cite[Lemma 3 and Theorem 1]{neilan2019discrete}, we can show that
\begin{equation}
    \|\nabla v_h \|_{L^2(\mathcal{T}_h)} \le C \left( \|\Delta v_h \|_{L^2(\mathcal{T}_h)} +  \Bigg(   \sum_{ F \in \mathcal{F}_h^{i}} h_{F}^{-1} \big\|   \lj \nabla v_h \rj    \big\|_{L^2(F)}^2     \Bigg)^{\frac{1}{2}} \right), 
\end{equation}
for some constant $C$. It triggers us to define a new mesh-dependent norm on $V_h^0$ as
\begin{equation}
    \|v_h\|_h^2 := \|D^2 v_h \|_{L^2(\mathcal{T}_h)}^2 + \sum_{F \in \mathcal{F}_h^{i}} h_{F}^{-1} \big\|   \lj \nabla v_h \rj    \big\|_{L^2(F)}^2. 
\end{equation}
It is not hard to see that 
\begin{equation}
	\|v_h\|_{H^1(\Omega)} \le C \|v_h\|_h. 
\end{equation}

Armed with the new norm, we can establish the following discrete Sobolev inequality:
\begin{theorem}[Discrete Sobolev inequality]
    For any $v_h \in V_h^0$, we have 
    \begin{equation} \label{equ:discrete sobolev inequaltiy}
        \|v_h\|_{L^{\infty}(\Omega)} \le C \|v_h\|_h. 
    \end{equation}
\end{theorem}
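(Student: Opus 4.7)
The plan is to route through the enrichment operator $E_h$ already used in the proof of the discrete Miranda--Talenti estimate. Writing
\begin{equation*}
  \|v_h\|_{L^\infty(\Omega)} \le \|v_h - E_h v_h\|_{L^\infty(\Omega)} + \|E_h v_h\|_{L^\infty(\Omega)},
\end{equation*}
I would bound the two pieces separately. Since $E_h v_h \in H^2(\Omega)\cap H^1_0(\Omega)$ and $d \in \{2,3\}$, the continuous Sobolev embedding $H^2(\Omega)\hookrightarrow L^\infty(\Omega)$ together with the Poincar\'e inequality gives $\|E_h v_h\|_{L^\infty(\Omega)} \le C\,|E_h v_h|_{H^2(\Omega)}$.

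First I would show $|E_h v_h|_{H^2(\Omega)} \le C\|v_h\|_h$. By the triangle inequality, $|E_h v_h|_{H^2(\Omega)} \le \|D^2 v_h\|_{L^2(\mathcal{T}_h)} + |v_h - E_h v_h|_{H^2(\mathcal{T}_h)}$, and the standard approximation property of the enrichment operator from \cite{neilan2019discrete} supplies
\begin{equation*}
  |v_h - E_h v_h|_{H^2(\mathcal{T}_h)}^2 \;\le\; C \sum_{F \in \mathcal{F}_h^{i}} h_F^{-1} \|\lj \nabla v_h \rj\|_{L^2(F)}^2.
\end{equation*}
Both terms on the right of the triangle inequality are therefore controlled by $\|v_h\|_h$, which, combined with the Sobolev embedding above, takes care of the $\|E_h v_h\|_{L^\infty}$ piece.

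Next I would handle $\|v_h - E_h v_h\|_{L^\infty(\Omega)}$ element by element, using an inverse inequality together with the $L^2$ approximation bound
\begin{equation*}
  \|v_h - E_h v_h\|_{L^2(K)}^2 \;\le\; C \sum_{F \subset \partial \omega(K)} h_F^{3} \,\|\lj \nabla v_h \rj\|_{L^2(F)}^2,
\end{equation*}
where $\omega(K)$ is the patch of elements sharing a vertex with $K$. Since $v_h - E_h v_h$ is a polynomial on $K$, the inverse inequality yields $\|v_h - E_h v_h\|_{L^\infty(K)} \le C h_K^{-d/2}\|v_h - E_h v_h\|_{L^2(K)}$. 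Putting the two together produces
\begin{equation*}
  \|v_h - E_h v_h\|_{L^\infty(K)} \;\le\; C h_K^{2 - d/2}\Bigl( \sum_{F} h_F^{-1} \|\lj \nabla v_h \rj\|_{L^2(F)}^2 \Bigr)^{1/2} \;\le\; C h_K^{2 - d/2}\|v_h\|_h,
\end{equation*}
and for $d \in \{2,3\}$ the exponent $2-d/2 \ge 1/2 > 0$, so this term is bounded independently of $h$ (and in fact tends to zero).

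The main obstacle, and the only place where the argument is not completely mechanical, is verifying that the enrichment operator indeed enjoys both the $H^2$-broken and the $L^2$ approximation estimates in the form required, and checking that the Sobolev embedding constant is independent of $h$. The former is inherited directly from the construction in \cite{neilan2019discrete} (the same operator used for the discrete Miranda--Talenti estimate), while the latter is simply the continuous embedding applied to the $H^2$-conforming function $E_h v_h$. Assembling these ingredients yields $\|v_h\|_{L^\infty(\Omega)} \le C\|v_h\|_h$ uniformly in $h$.
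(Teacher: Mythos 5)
Your proposal is correct and follows essentially the same route as the paper: split via the enrichment operator $E_h$, use the Sobolev embedding $H^2(\Omega)\hookrightarrow L^\infty(\Omega)$ on $E_h v_h$, and control both $|E_h v_h|_{H^2(\Omega)}$ and $h^{-d/2}\|v_h - E_h v_h\|_{L^2(\Omega)}$ by $\|v_h\|_h$ using the approximation estimates for $E_h$ from \cite{neilan2019discrete}. The only difference is that you spell out the scaling $h_K^{2-d/2}$ explicitly, whereas the paper invokes Lemma~3 of \cite{neilan2019discrete} directly.
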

\begin{proof} First, notice that $H^2(\Omega) \hookrightarrow L^{\infty}(\Omega)$ and $E_hv_h \in H^2(\Omega)\cap H^1_0(\Omega)$. Then, by the triangle inequality and the inverse inequality \cite{brenner2008mathematical, ciarlet2002}, we can deduce that
    \begin{align*}
        \|v_h\|_{L^{\infty}(\Omega)} &\le \|E_hv_h\|_{L^{\infty}(\Omega)} + \|v_h - E_hv_h\|_{L^{\infty}(\Omega)} \\
        & \le C |E_hv_h|_{H^2(\Omega)} + C h^{-\frac{d}{2}} \|v_h - E_hv_h\|_{L^{2}(\Omega)} \\
        & \le C|v_h|_{H^2(\T_h)} + C |v_h - E_hv_h|_{H^2(\Omega)} + C h^{-\frac{d}{2}} \|v_h - E_hv_h\|_{L^{2}(\Omega)} \\
        & \le C \|v_h\|_h,
    \end{align*}
   where we have used \cite[Lemma 3]{neilan2019discrete} in the last inequality.
\end{proof}

\section{Linearization and finite element approximation}\label{sec:lin}
In this section, we consider the linearization of the model equation \eqref{equ: model equations} near $u^{\epsilon}$ and investigate its finite element approximation. It serves the building block to prove the existence, uniqueness, and error analysis of the numerical methods for the fourth order semi-linear PDE \eqref{equ: model equations}.

To linearize the model equation \eqref{equ: model equations}, we use the following identity from \cite{caffarelli1997properties}:
\begin{equation} \label{equ:identity}
	\det(D^2(u^{\eps} + tv)) = \det(D^2u^{\eps}) + t\,\mbox{tr}(\Phi^{\eps}D^2v) + \cdots+t^n \det(D^2v),
\end{equation}
where 
\begin{equation}
	\Phi^{\eps}:= \mbox{cof}(D^2u^{\eps}). 
\end{equation}
  By differentiating \eqref{equ:identity} at $t=0$, the linearized form of the operator $M^{\eps}(u^{\eps}):= \eps \Delta^2 u^{\eps} - \det(D^2u^{\eps})$ reads as
 \begin{equation}\label{equ:linearoperator}
	L_{u^{\eps}}(v):= \eps\Delta^2v - \Phi^{\eps}:D^2v = \eps\Delta^2v - \text{div}(\Phi^{\eps}\nabla v),
\end{equation}
where we have used the divergence-free property of the cofactor matrix $\Phi^{\eps}$  \cite[Lemma 7]{neilan2010nonconforming}. In the above equation, $A:B$ denotes the Frobenius inner product of two $d\times d$ matrices $A$ and $B$.

We focus on the approximation of  the following linearized equation 
\begin{subequations}\label{equ: Linearization of the model equations}
	\begin{align}
		L_{u^{\eps}}(v) = \varphi & \quad \text{in $\Omega$}, \\
		v = 0 & \quad \text{on $\partial\Omega$}, \\
		\Delta v = \psi & \quad \text{on $\partial\Omega$},
	\end{align}
\end{subequations}
where  $\varphi \in V^0_{*}$, $\psi \in H^{-\frac{1}{2}}(\partial\Omega)$, and $V^{0}_{*}$ is the dual space of $V^0$.

To define the finite element method for \eqref{equ: Linearization of the model equations}, we begin by converting \eqref{equ: Linearization of the model equations} into its weak form. Multiplying both sides of the equations by a test function $w \in V^0$ and applying the divergence theorem, we obtain the variational  formulation for \eqref{equ: Linearization of the model equations} as follows: Find $v \in V_0$ such that
\begin{equation}\label{equ: weak formulation of the linearization}
	a(v,w) = \langle\varphi, w \rangle + \eps(\psi, \nabla w\cdot{\bm n})_{L^2(\partial\Omega)} \quad \forall w \in V_0,
\end{equation}
where 
\begin{equation} \label{equ:bilinearform}
	a(v,w) = \eps(\Delta v, \Delta w)_{L^2(\Omega)} + (\Phi^{\eps} \nabla v, \nabla w)_{L^2(\Omega)}. 
\end{equation}

Under Assumption \ref{ass:convex assumption}, where $u^{\eps}$ is strictly convex, it follows that the matrix $\Phi^{\eps}$ is positive definite. Consequently, we can demonstrate that
\begin{equation*}
	a(v,v) \ge C \eps \|v\|_{H^2(\Omega)}^2. 
\end{equation*}
According to the Lax-Milgram lemma \cite{evans2010},  the variational problem \eqref{equ: weak formulation of the linearization} is well-posed.

 Define the discrete bilinear form $a_h(\cdot, \cdot)$ as
 \begin{equation}\label{equ:discrete bilinear form}
 	 \begin{aligned}
 	 	   a_h(v_h,w_h) & = \eps\left( \Delta v_h, \Delta w_h   \right)_{L^2(\mathcal{T}_h)} - \eps \sum_{ F \in F_h^i} \left( \la \Delta v_h \ra, \lj \nabla w_h \rj\right)_{L^2(F)} \\
 	 	 & \quad     
 	 	 - \eps \sum_{ F \in F_h^i} \left( \la \Delta w_h \ra, \lj \nabla v_h \rj\right)_{L^2(F)} 
       - \left( \Phi^{\eps} : D^2 v_h,  w_h   \right)_{L^2(\mathcal{T}_h)}. 
 	 \end{aligned}
 \end{equation}
In light of the discrete Miranda-Talenti estimate, we stabilize the discrete bilinear form by the jump of the flux.  We define the stablized bilinear form $A_h^{\sigma}(\cdot, \cdot)$ as 
    \begin{equation}\label{equ:stablized discrete bilinear form}
        A_h^{\sigma}(v_h,w_h) = a_h(v_h,w_h) + \sigma ( \eps + \eps^{-3})  \sum_{F \in \mathcal{F}_h^i} h_{F}^{-1} \left(  \lj \nabla v_h  \rj,  \lj \nabla w_h \rj   \right)_{L^2(\mathcal{F})},
    \end{equation}
where  $\sigma = \mathcal{O}(1)$ is a penalty parameter independent of $h$ and $\epsilon$.

The C0IP method of \eqref{equ: Linearization of the model equations} is to find $v_h\in V^0_h$ such that
\begin{equation}\label{equ: numerical scheme for linearization}
   A_h^{\sigma}(v_h,w_h) = \langle\varphi, w_h \rangle + \eps(\psi, \nabla w_h\cdot{\bm n})_{L^2(\partial\Omega)}, 
\end{equation}
for all $w_h \in V^0_h$.

For the C0IP method in \eqref{equ: numerical scheme for linearization}, we can establish the well-posedness. 
\begin{theorem}\label{thm:uniqueness of linearized model}
    There exists $\sigma_0 > 0$ such that for any $\sigma \geq \sigma_0$, there exists a unique $v_h \in V_h^0$ such that \eqref{equ: numerical scheme for linearization} holds.
\end{theorem}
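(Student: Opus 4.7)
The plan is to reduce the theorem to a coercivity estimate and then build that estimate from the discrete Miranda-Talenti inequality \eqref{equ:discrete MT}. Since \eqref{equ: numerical scheme for linearization} is a square linear system on the finite-dimensional space $V_h^0$, existence and uniqueness are equivalent and both follow from showing that $A_h^\sigma(v_h,v_h) = 0$ forces $v_h = 0$. My target is therefore
\begin{equation*}
A_h^\sigma(v_h, v_h) \ge C_0\,\eps\,\|v_h\|_h^2 \qquad \forall\,v_h \in V_h^0,
\end{equation*}
for every $\sigma \ge \sigma_0$, with $C_0$ and $\sigma_0$ positive constants independent of $h$ and $\eps$.

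Testing $w_h = v_h$ in \eqref{equ:stablized discrete bilinear form} splits $A_h^\sigma(v_h,v_h)$ into the bulk term $\eps\|\Delta v_h\|_{L^2(\T_h)}^2$, the symmetric consistency term $-2\eps\sum_{F\in\F_h^i}(\la \Delta v_h\ra, \lj \nabla v_h\rj)_{L^2(F)}$, the lower-order term $-(\Phi^\eps : D^2 v_h, v_h)_{L^2(\T_h)}$, and the penalty $\sigma(\eps+\eps^{-3})\sum_F h_F^{-1}\|\lj\nabla v_h\rj\|_{L^2(F)}^2$. The consistency contributions are handled in the standard $C^0$IP manner by Cauchy-Schwarz, a discrete trace inequality on interior faces, and Young's inequality, leaving at most $\tfrac{\eps}{2}\|\Delta v_h\|_{L^2(\T_h)}^2 + C_1\eps\sum_F h_F^{-1}\|\lj \nabla v_h\rj\|_{L^2(F)}^2$ to be absorbed. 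The real work lies in the lower-order term: exploiting the row-wise divergence-free property of the cofactor matrix $\Phi^\eps$ together with continuity of $v_h$ across interior faces, piecewise integration by parts on each element yields
\begin{equation*}
-(\Phi^\eps : D^2 v_h, v_h)_{L^2(\T_h)} = (\Phi^\eps \nabla v_h, \nabla v_h)_{L^2(\Omega)} - \sum_{F\in\F_h^i} \int_F v_h\,(\bm{n}\!\cdot\!\Phi^\eps \bm{n})\,\lj \nabla v_h\rj\,ds,
\end{equation*}
the boundary face contributions vanishing because $v_h|_{\partial\Omega} = 0$. Positive semidefiniteness of $\Phi^\eps$ under Assumption \ref{ass:convex assumption} then allows the volume integral to be discarded.

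The residual face sum is the principal obstacle, because its factor $\bm{n}\!\cdot\!\Phi^\eps\bm{n}$ is singular of order $\eps^{-1}$ by \eqref{equ: priori bounds for u^eps}. I will bound it by Cauchy-Schwarz on each face together with the trace estimate $\sum_F h_F\|v_h\|_{L^2(F)}^2 \le C\|v_h\|_{H^1(\Omega)}^2$ and the discrete embedding $\|v_h\|_{H^1(\Omega)} \le C\|v_h\|_h$ proved in Section \ref{ssec:dmt}, obtaining
\begin{equation*}
\Bigl|\sum_{F\in\F_h^i}\int_F v_h\,(\bm{n}\!\cdot\!\Phi^\eps\bm{n})\,\lj \nabla v_h\rj\Bigr| \le C\eps^{-1}\|v_h\|_h\Bigl(\sum_{F\in\F_h^i} h_F^{-1}\|\lj \nabla v_h\rj\|_{L^2(F)}^2\Bigr)^{1/2}.
\end{equation*}
A weighted Young's inequality with weight $\sim\eps$ splits this into $\eta\eps\|v_h\|_h^2 + C(\eta)\eps^{-3}\sum_F h_F^{-1}\|\lj \nabla v_h\rj\|_{L^2(F)}^2$, which explains precisely why the stabilization in \eqref{equ:stablized discrete bilinear form} is tuned to the combined weight $\sigma(\eps+\eps^{-3})$. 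A single application of the discrete Miranda-Talenti inequality \eqref{equ:discrete MT} finally converts the surviving $\tfrac{\eps}{2}\|\Delta v_h\|_{L^2(\T_h)}^2$ into $\tfrac{\eps}{4}\|D^2 v_h\|_{L^2(\T_h)}^2$ at the cost of an additional $\mathcal{O}(\eps)$ multiple of the jump penalty. Collecting all contributions and choosing $\eta$ small and $\sigma_0$ large enough (depending only on $C_\dag$, the mesh shape-regularity, the Poincar\'{e} constant of $\Omega$, and the $L^\infty$ bound on $\Phi^\eps$) so that $\sigma(\eps+\eps^{-3})$ dominates both the $C\eps$ and $C\eps^{-3}$ error coefficients produced along the way delivers the coercivity, from which the theorem follows.
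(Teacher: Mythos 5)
Your proposal is correct and follows essentially the same route as the paper's proof: reduction to coercivity on the finite-dimensional space, elementwise integration by parts of the lower-order term using the divergence-free cofactor matrix, absorption of the consistency and face terms via trace and Young inequalities (with the same $\eps^{-1}\to\eps^{-3}$ bookkeeping that motivates the $\sigma(\eps+\eps^{-3})$ penalty weight), and a final application of the discrete Miranda--Talenti estimate \eqref{equ:discrete MT}. The only cosmetic difference is that you write the interior-face residual explicitly as $v_h\,({\bm n}\cdot\Phi^{\eps}{\bm n})\lj\nabla v_h\rj$ and pass directly through $\|v_h\|_h$, whereas the paper keeps $\lj\Phi^{\eps}\nabla v_h\rj$ and routes through $\|v_h\|_{L^2(\Omega)}$; these are equivalent.
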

\begin{proof} Boundedness of $A_h^{\sigma}$ can be easily proven using the trace inequality and the Cauchy-Schwartz inequality. We only need to establish the existence of $\sigma_0 > 0$ such that
    \begin{equation}\label{equ:coercivity}
     a_h(v_h,v_h) + \sigma ( \eps + \eps^{-3})  \sum_{F \in \mathcal{F}_h^i} h_{F}^{-1} \|  \lj \nabla v_h  \rj \|_{L^2(\mathcal{F})}^2 \ge C \eps \|v_h\|_h^2,
    \end{equation}
     for any $\sigma \geq \sigma_0$. 
    Using  integration by parts, we can deduce that 
    \begin{equation*}
        \begin{aligned}
            a_h(v_h,v_h) = &\eps\left( \Delta v_h, \Delta v_h   \right)_{L^2(\mathcal{T}_h)} - 2\eps \sum_{ F \in F_h^i} \left( \la \Delta v_h \ra, \lj \nabla v_h \rj\right)_{L^2(F)}  \\
            &\quad + (\Phi^{\eps} \nabla v_h, \nabla v_h   )_{L^2(\T_h)} - \sum_{F \in F_h^i} \left(  \lj \Phi^{\eps} \nabla v_h \rj, v_h  \right)_{L^2(F)}, 
        \end{aligned}
    \end{equation*}
Applying Holder's inequality and the trace inequality, we obtain
    \begin{align*}
                & \Big|  \sum_{F \in \mathcal{F}_h^i} \left( \la \Delta v_h \ra, \lj \nabla v_h  \rj\right)_{L^2(F)}   \Big| \\
                \le &\left( \sum_{F \in \mathcal{F}_h^i} h_{F} \| \la \Delta v_h \ra\|_{L^2(F)}^2  \right)^{1/2}\left( \sum_{F \in \mathcal{F}_h^i} h_{F}^{-1} \big\|   \lj \nabla v_h  \rj    \big\|_{L^2(F)}^2   \right)^{1/2} \\
                  \le& C  \left\|  \Delta v_h   \right\|_{L^2(\mathcal{T}_h)} \left( \sum_{F \in \mathcal{F}_h^i} h_{F}^{-1} \big\|   \lj \nabla v_h  \rj    \big\|_{L^2(F)}^2   \right)^{1/2} \\
                 \le &\frac{1}{8} \left\|  \Delta v_h   \right\|_{L^2(\mathcal{T}_h)}^2 + C \sum_{F \in \mathcal{F}_h^i} h_{F}^{-1} \big\|   \lj \nabla v_h \rj    \big\|_{L^2(F)}^2,
    \end{align*}
    where we have used the Young's inequality in the last inequality. 
   Similarly, we have 
    \begin{align*}
        &\Big|    \sum_{F \in F_h^i} \left(  \lj \Phi^{\eps} \nabla v_h \rj, v_h  \right)_{L^2(F)}    \Big| \\
        \le & \left( \sum_{F \in \mathcal{F}_h^i} h_{F} \|  v_h \|_{L^2(F)}^2  \right)^{1/2}\left( \sum_{F \in \mathcal{F}_h^i} h_{F}^{-1} \big\|   \lj \Phi^{\eps} \nabla v_h  \rj    \big\|_{L^2(F)}^2   \right)^{1/2} \\
         \le & C \eps^{-1} \|v_h\|_{L^2(\Omega)} \left( \sum_{F \in \mathcal{F}_h^i} h_{F}^{-1} \big\|   \lj \nabla v_h  \rj    \big\|_{L^2(F)}^2   \right)^{1/2} \\
         \le & \frac{\eps}{4} \|v_h\|_{L^2(\Omega)}^2 + C \eps^{-3}  \sum_{F \in \mathcal{F}_h^i} h_{F}^{-1} \big\|   \lj \nabla v_h  \rj    \big\|_{L^2(F)}^2.
    \end{align*}
Then, \eqref{equ:coercivity} follows by combining the above two estimates with the discrete Miranda-Talenti estimate \eqref{equ:discrete MT}. 
\end{proof}

\begin{remark}
    If we have a uniform lower bound on $\Phi^{\epsilon}$ such that 
    \begin{equation*}
        {\bm x}^{T}\Phi^{\eps} {\bm x}  \geq C \|{\bm x}\|^2  \quad \forall \eps >0, 
    \end{equation*}
    then $\eps^{-3}$ in \eqref{equ: numerical scheme for linearization} can be replaced by $\eps^{-2}$.
\end{remark}

Furthermore, we can get the following error estimate in discrete $H^2$ norm.

\begin{theorem}\label{thm: H^2 error estimate of linearization}
Let $v$ be the solution of \eqref{equ: weak formulation of the linearization} and $v_I$ be its interpolation in $V_h$. Suppose $v \in V^0\cap H^{s}(\Omega)$, then there holds
    \begin{equation} \label{equ: closeness error}
        \|v_h - v_I\|_h    \le C \left(1 + \eps^{-2} +  \sigma(1 + \eps^{-4})\right) h^l |v|_{H^s(\Omega)},
    \end{equation}
    and  
    \begin{equation} \label{equ: error of linearization problem}
        \|v - v_h\|_h    \le C \left(1 + \eps^{-2} +  \sigma(1 + \eps^{-4})\right) h^l |v|_{H^s(\Omega)}, 
    \end{equation}
    where $l = \min(k-1, s-2)$. 
\end{theorem}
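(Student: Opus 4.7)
The plan is a standard Strang-type argument: establish Galerkin orthogonality through a consistency identity, invoke the coercivity bound \eqref{equ:coercivity} on the test direction $w_h = v_h - v_I$, and then bound each term of $A_h^\sigma(v - v_I, v_h - v_I)$ by the product of an interpolation residual (using \eqref{equ: approximaiton of interpolation} combined with a face-trace inequality) and $\|v_h - v_I\|_h$.

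For consistency, I would test the linearized PDE $L_{u^\eps}(v) = \varphi$ against $w_h \in V_h^0$ and integrate by parts elementwise. Because $v \in H^2(\Omega)$ globally, every jump $\lj \nabla v \rj$ vanishes on interior faces, so both the penalty term and the symmetry-consistency term $\eps\sum_F(\la \Delta w_h\ra, \lj \nabla v\rj)_{L^2(F)}$ in $A_h^\sigma(v,w_h)$ disappear. Elementwise integration by parts of $\eps(\Delta v, \Delta w_h)_{L^2(\T_h)}$ produces $-\eps(\nabla\Delta v,\nabla w_h)_{L^2(\Omega)}$ plus face contributions that, by continuity of $\Delta v$ across interior faces, exactly cancel the remaining term $-\eps\sum_F(\la\Delta v\ra,\lj\nabla w_h\rj)_{L^2(F)}$, leaving only the boundary piece $\eps(\psi,\nabla w_h\cdot{\bm n})_{L^2(\partial\Omega)}$ coming from $\Delta v=\psi$ on $\partial\Omega$. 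A second integration by parts (justified by $v\in H^3$) converts $-\eps(\nabla\Delta v,\nabla w_h)$ to $\eps(\Delta^2 v,w_h)$ since $w_h$ vanishes on $\partial\Omega$. Combining with $-(\Phi^\eps:D^2 v, w_h)_{L^2(\Omega)}$ yields $A_h^\sigma(v,w_h) = \langle\varphi,w_h\rangle + \eps(\psi,\nabla w_h \cdot{\bm n})_{L^2(\partial\Omega)}$, and subtracting \eqref{equ: numerical scheme for linearization} gives the Galerkin orthogonality $A_h^\sigma(v-v_h,w_h) = 0$ for every $w_h \in V_h^0$.

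Setting $w_h = v_h - v_I$ and invoking \eqref{equ:coercivity} gives $C\eps\|v_h - v_I\|_h^2 \le A_h^\sigma(v - v_I, v_h - v_I)$. I would then bound each summand of $A_h^\sigma(v-v_I,v_h-v_I)$: the bulk Hessian term is at most $C\eps h^l|v|_{H^s}\|v_h-v_I\|_h$ by Cauchy--Schwarz; the two face terms involving $\la \Delta(v-v_I)\ra$ are controlled by the same bound after the trace inequality $h_F\|\la\Delta(v-v_I)\ra\|_{L^2(F)}^2 \le C\|D^2(v-v_I)\|_{L^2(K_F)}^2$ together with \eqref{equ: approximaiton of interpolation}; the face terms involving $\lj \nabla(v-v_I)\rj$, which equal $-\lj\nabla v_I\rj$ since $v\in H^2$, contribute $O(h^l|v|_{H^s})$ by the same trace argument, giving $O(\sigma(\eps+\eps^{-3})h^l|v|_{H^s})\|v_h-v_I\|_h$ for the penalty piece; finally, the convection-like term $-(\Phi^\eps:D^2(v-v_I),v_h-v_I)_{L^2(\T_h)}$ uses $\|\Phi^\eps\|_{L^\infty}=O(\eps^{-1})$ from \eqref{equ: priori bounds for u^eps} together with $\|v_h-v_I\|_{L^2}\le C\|v_h-v_I\|_h$ (a consequence of the discrete embedding following \eqref{equ:discrete MT}), producing $O(\eps^{-1}h^l|v|_{H^s})\|v_h-v_I\|_h$. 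Dividing both sides by $\eps\|v_h-v_I\|_h$ collects the powers of $\eps^{-1}$ into the prefactor $1+\eps^{-2}+\sigma(1+\eps^{-4})$, which is \eqref{equ: closeness error}. The second estimate \eqref{equ: error of linearization problem} then follows from the triangle inequality $\|v-v_h\|_h \le \|v-v_I\|_h + \|v_I-v_h\|_h$ after noting that $\|v-v_I\|_h \le C h^l|v|_{H^s}$ by the same interpolation-plus-trace argument.

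The main obstacle I anticipate is the careful bookkeeping of $\eps$-powers: every summand must be tracked so that only the $\Phi^\eps$ term contributes an $\eps^{-2}$ and only the penalty contributes the $\eps^{-4}$, while all remaining pieces stay of order one after being divided by $\eps$. A secondary subtlety is making rigorous sense of $\|\cdot\|_h$ on the nonconforming piece $v - v_I$, which requires the identity $\lj\nabla(v-v_I)\rj=-\lj\nabla v_I\rj$ together with an $H^2$-to-face trace bound so that the broken norm can be controlled by the interpolation estimate \eqref{equ: approximaiton of interpolation}.
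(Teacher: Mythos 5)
Your proposal is correct and follows essentially the same route as the paper: the consistency identity you derive by elementwise integration by parts is exactly the paper's equation \eqref{equ:intermeidate result}, and the subsequent term-by-term bounds (bulk Hessian, face terms via the trace inequality, the $\Phi^{\eps}$ term via $\|\Phi^{\eps}\|_{L^{\infty}}=\mathcal{O}(\eps^{-1})$ and the discrete embedding, and the penalty term) together with the coercivity \eqref{equ:coercivity} reproduce the paper's argument and its $\eps$-bookkeeping. The only cosmetic difference is that you phrase the first step as Galerkin orthogonality plus a Strang-type estimate, whereas the paper substitutes the consistency identity directly into $A_h^{\sigma}(w_h,w_h)$; the computations are identical.
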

\begin{proof}
   Let $w_h = v_h - v_I$. Then,  we have 
       \begin{equation} \label{equ:main term}
    	    \begin{aligned}
        A_h^{\sigma}(w_h,w_h) & = A_h^{\sigma}(v_h,w_h) - A_h^{\sigma}(v_I,w_h) \\
        & = \langle\varphi, w_h \rangle + \eps(\psi, \nabla w_h\cdot{\bm n})_{L^2(\partial\Omega)} - A_h^{\sigma}(v_I,w_h). 
    \end{aligned}
    \end{equation}
    Using the equation  \eqref{equ: Linearization of the model equations}, we have 
    \begin{equation} \label{equ:intermeidate result}
    	    \begin{aligned}
        \langle\varphi, w_h \rangle & = ( \eps \Delta^2 v -  \Phi^{\eps} : D^2v   , w_h)_{L^2(\Omega)} \\
        = &-\eps (\nabla \Delta v, \nabla w_h)_{L^2(\Omega)} -  (\Phi^{\eps}: D^2 v,  w_h)_{L^2(\Omega)} \\
         = &\quad \eps (\Delta v, \Delta w_h)_{L^2(\T_h)} - \eps \sum_{F \in \mathcal{F}_h^i} \left(  \Delta v, \lj \nabla w_h \rj\right)_{L^2(F)} \\
         & - \eps (\psi, \nabla w_h \cdot {\bm n})_{L^2(\partial\Omega)}  -  (\Phi^{\eps}: D^2 v,  w_h)_{L^2(\Omega)}.
    \end{aligned}
    \end{equation}
Substituting \eqref{equ:intermeidate result} into \eqref{equ:main term}, we can deduce that    
    \begin{align*}
        &A_h^{\sigma}(w_h,w_h)\\
         = & \eps (\Delta v, \Delta w_h)_{L^2(\T_h)} - \eps \sum_{F \in \mathcal{F}_h^i} \left(  \Delta v, \lj \nabla w_h \rj\right)_{L^2(F)}  -  (\Phi^{\eps}: D^2 v, w_h)_{L^2(\Omega)} - A_h^{\sigma}(v_I,w_h) \\
        = &\quad \eps (\Delta v - \Delta v_I, \Delta w_h)_{L^2(\T_h)} \\
        &- \eps \sum_{F \in \mathcal{F}_h^i} \left(  \Delta v - \la \Delta v_I \ra, \lj \nabla w_h  \rj\right)_{L^2(F)} - \eps \sum_{F \in \mathcal{F}_h^i} \left(  \la \Delta w_h \ra, \lj \nabla v - \nabla v_I  \rj\right)_{L^2(F)} \\
        & -  (\Phi^{\eps}: D^2( v - v_I),  w_h)_{L^2(\T_h)} -  \sigma ( \eps + \eps^{-3} ) \sum_{F \in \mathcal{F}_h^i} h_{F}^{-1} \left(  \lj \nabla v_I -\nabla v \rj,  \lj \nabla w_h  \rj   \right)_{L^2(F)}.
    \end{align*}
Using the trace inequality and \eqref{equ: approximaiton of interpolation}, we obtain:
    \begin{equation*}
    	    \begin{aligned}
        &|A_h^{\sigma}(w_h,w_h)| \\
        \le  & C \eps h^l|v|_{H^s(\Omega)} \|w_h\|_h + C \eps^{-1} h^l |v|_{H^s(\Omega)} \|w_h\|_h + C \sigma ( \eps + \eps^{-3} ) h^l |v|_{H^s(\Omega)} \|w_h\|_h \\
         \le  & C \eps \left(1 + \eps^{-2} +  \sigma(1 + \eps^{-4})\right)h^l |v|_{H^s(\Omega)} \|w_h\|_h. 
    \end{aligned}
    \end{equation*}
Combining the previous estimate with the coercivity \eqref{equ:coercivity}, we have established \eqref{equ: closeness error}. Moreover, \eqref{equ: error of linearization problem} directly follows from \eqref{equ: closeness error} and \eqref{equ: approximaiton of interpolation}.
\end{proof}

\section{$C^0$IP methods for the model equation} \label{sec:c0ip}
In this section, we formulate the C0IP equation for the nonlinear model equation \eqref{equ: model equations} and perform the error analysis.

\subsection{Numerical scheme}\label{ssec:ns}

The weak form of the model equation \eqref{equ: model equations} reads as: Seeking $u^{\eps} \in V^g$ such that 
\begin{equation}
	-\eps(\Delta u^{\eps}, \Delta v)_{L^2(\Omega)} + \Big( \det(D^2u^{\eps}), v \Big)_{L^2(\Omega)} = (f,v)_{L^2(\Omega)} - \eps \langle\eps, \nabla v \cdot {\bm n}\rangle_{L^2(\partial\Omega)}, \quad \forall v \in V^0.
\end{equation}

To facilicate the definition of numerical scheme, we introduce the new bilinear form $b_h^{\sigma}(\cdot, \cdot)$ as 
\begin{equation}
\begin{aligned}
	 b_h^{\sigma}(v_h,w_h) = &  \sigma (\eps + \eps^{-3})  \sum_{F \in \mathcal{F}_h^i} h_{F}^{-1} \left(  \lj \nabla v_h  \rj,  \lj \nabla w_h  \rj   \right)_{L^2(F)}  \\
    &- \eps \sum_{F \in \mathcal{F}_h^i} \left( \la \Delta v_h \ra, \lj \nabla w_h \rj\right)_{L^2(F)} - \eps \sum_{F \in \mathcal{F}_h^i} \left( \la \Delta w_h \ra, \lj \nabla v_h \rj\right)_{L^2(F)} .
\end{aligned}
\end{equation}
The new  C0IP method for the model problem is defined to  finding  
$u_h^{\eps} \in V_h^g$ such that 
\begin{equation}\label{equ: numerical scheme for model problem}
    \begin{aligned}
       & -\eps\Big(\Delta u_h^{\eps}, \Delta v_h \Big)_{L^2(\T_h)}  
         +  \big( \det(D^2u_h^{\eps}), v_h \big)_{L^2(\T_h)} -b_h^{\sigma}(u^{\epsilon}_h, v_h) \\
          = &  (f,v_h)_{L^2(\Omega)} - \eps \langle\eps, \nabla v_h\cdot {\bm n}\rangle_{L^2(\partial\Omega)}, 
    \end{aligned}
\end{equation}
for all $v_h \in V_h^0$. 

It's worth noting that the term $\big( \det(D^2u_h^{\eps}), v_h \big)_{L^2(\T_h)}$ is nonlinear. In the next subsection, we will establish the existence and uniqueness of the solution for \eqref{equ: numerical scheme for model problem}.

\subsection{Well-posedness and $H^2$ error estimate for the numerical scheme}
In this section, we establish the well-posedness of \eqref{equ: numerical scheme for model problem} and simultaneously prove the convergence rate using the combined fixed point and linearization techniques as in \cite{feng2009mixed, neilan2010nonconforming}.

We first define a linear operator $T: V_h^{g} \to V_h^{g}$. For any given $v_h \in V_h^{g}$, let $T(v_h) \in V_h^{g}$ denote the solution of the following problem:
\begin{equation}\label{equ:fixed poiont map}
	\begin{aligned}
	&A_h^{\sigma}( v_h - T(v_h), w_h ) =  \eps\Big(\Delta v_h, \Delta w_h \Big)_{L^2(\T_h)} -  \big( \det(D^2v_h), w_h \big)_{L^2(\T_h)} \\
	& + b_h^{\sigma} (v_h , w_h) + (f,w_h)_{L^2(\Omega)} - (\eps^2, \nabla w_h\cdot {\bm n})_{L^2(\partial\Omega)}, \quad \forall w_h \in V_h^0.
\end{aligned}
\end{equation}

Theorem \ref{thm:uniqueness of linearized model} tells that the map $T$ is well-defined. It is not hard to see that the fixed point of $T$ is the solution of \eqref{equ: numerical scheme for model problem} and vice versa. We now proceed to prove the existence of such a fixed point in the vicinity of the interpolation of $u^{\epsilon}$. For this purpose, we define a neighborhood near $u^{\epsilon}_I$ as
\begin{equation}\label{equ:neighborhood}
	\mathbb{B}_h(\rho) := \{ v_h \in V_h^g: \ \|v_h - u_I^{\eps}\|_{h} \le \rho \}.
\end{equation}

We commence our proof by establishing the following lemma
\begin{lemma}\label{lem:sit on the ball}
Suppose $u^{\epsilon} \in H^s(\Omega)$. Then, the following estimate holds
	\begin{equation}
        \begin{aligned}
		\|u_I^{\eps} - T(u_I^{\eps})\|_{h} 
         \le C_1 \eps^{-1} \left( 1  + \eps^{-d} + \sigma(1 + \eps^{-4}) \right)h^{\min(k-1, s-2)}\|u^{\epsilon}\|_{H^s(\Omega)}
        \end{aligned}
	\end{equation}
	for some constant $C_1 > 0$.
\end{lemma}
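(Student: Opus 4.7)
The plan is to plug $v_h = u_I^{\eps}$ and $w_h = e_h := u_I^{\eps} - T(u_I^{\eps}) \in V_h^0$ into the defining relation \eqref{equ:fixed poiont map}, producing an explicit formula for $A_h^{\sigma}(e_h, e_h)$ that I can bound term by term before applying the coercivity \eqref{equ:coercivity} to recover $\|e_h\|_h$. The load $(f, e_h)_{L^2(\Omega)}$ is eliminated via the continuous PDE $-\eps \Delta^2 u^{\eps} + \det(D^2 u^{\eps}) = f$: since $u^{\eps} \in H^3(\Omega)$, $\Delta u^{\eps}$ lies in $H^1(\Omega)$ (so it is continuous across interior faces and has trace $\eps$ on $\partial \Omega$), and $e_h \in V_h^0$, two element-wise integrations by parts give
$$(f, e_h)_{L^2(\Omega)} = -\eps(\Delta u^{\eps}, \Delta e_h)_{L^2(\T_h)} + \eps \sum_{F \in \mathcal{F}_h^i} (\Delta u^{\eps}, \lj \nabla e_h \rj)_{L^2(F)} + \eps^2 (1, \nabla e_h \cdot {\bm n})_{L^2(\partial \Omega)} + (\det(D^2 u^{\eps}), e_h)_{L^2(\Omega)}.$$
The boundary integral here cancels exactly against the matching term in \eqref{equ:fixed poiont map}.

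After substituting and using $\la \Delta u^{\eps} \ra = \Delta u^{\eps}$ on $\mathcal{F}_h^i$ and $\lj \nabla u^{\eps} \rj = 0$ throughout, I obtain
\begin{align*}
A_h^{\sigma}(e_h, e_h) & = \eps \bigl(\Delta(u_I^{\eps} - u^{\eps}), \Delta e_h \bigr)_{L^2(\T_h)} + \bigl(\det(D^2 u^{\eps}) - \det(D^2 u_I^{\eps}), e_h \bigr)_{L^2(\T_h)} \\
& \quad + \eps \sum_{F \in \mathcal{F}_h^i} \bigl(\la \Delta(u^{\eps} - u_I^{\eps}) \ra, \lj \nabla e_h \rj\bigr)_{L^2(F)} - \eps \sum_{F \in \mathcal{F}_h^i} \bigl(\la \Delta e_h \ra, \lj \nabla(u_I^{\eps} - u^{\eps}) \rj\bigr)_{L^2(F)} \\
& \quad + \sigma(\eps + \eps^{-3}) \sum_{F \in \mathcal{F}_h^i} h_F^{-1} \bigl(\lj \nabla(u_I^{\eps} - u^{\eps}) \rj, \lj \nabla e_h \rj\bigr)_{L^2(F)}.
\end{align*}
The four linear interpolation-residual terms are handled exactly as in the proof of Theorem \ref{thm: H^2 error estimate of linearization} via Cauchy--Schwarz, the trace inequality, and \eqref{equ: approximaiton of interpolation}, contributing at most $C(\eps + \sigma(\eps + \eps^{-3})) h^{\min(k-1,s-2)} \|u^{\eps}\|_{H^s(\Omega)} \|e_h\|_h$.

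The main obstacle is the nonlinear determinant term, which has no analogue in the linear analysis of Section \ref{sec:lin}. I would exploit the multilinear expansion \eqref{equ:identity} with $t = 1$ and direction $u^{\eps} - u_I^{\eps}$ (equivalently, a polarization-type identity $\det(A) - \det(B) = \mathrm{cof}(\xi):(A-B)$ at a suitable intermediate matrix $\xi$) to obtain the pointwise bound
$$|\det(D^2 u^{\eps}) - \det(D^2 u_I^{\eps})| \le C \bigl(\|u^{\eps}\|_{W^{2,\infty}(\Omega)} + \|u_I^{\eps}\|_{W^{2,\infty}(\T_h)}\bigr)^{d-1}\, |D^2(u^{\eps} - u_I^{\eps})|.$$
The $W^{2,\infty}$-stability of Lagrange interpolation together with Assumption \ref{ass:convex assumption} gives $\|u_I^{\eps}\|_{W^{2,\infty}(\T_h)} \le C \|u^{\eps}\|_{W^{2,\infty}(\Omega)} = \mathcal{O}(\eps^{-1})$; Cauchy--Schwarz, \eqref{equ: approximaiton of interpolation}, and the embedding $\|e_h\|_{L^2(\Omega)} \le C \|e_h\|_h$ then control this term by $C \eps^{-(d-1)} h^{\min(k-1,s-2)} \|u^{\eps}\|_{H^s(\Omega)} \|e_h\|_h$. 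Summing all five estimates and dividing by $C \eps \|e_h\|_h$ via \eqref{equ:coercivity}, the overall factor becomes $C \eps^{-1}(\eps + \eps^{-(d-1)} + \sigma(\eps + \eps^{-3}))$, which is dominated by $C_1 \eps^{-1}(1 + \eps^{-d} + \sigma(1 + \eps^{-4}))$ as claimed.
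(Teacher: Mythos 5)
Your proposal is correct and follows essentially the same route as the paper's own proof: plug $v_h = u_I^{\eps}$ into the defining relation for $T$, eliminate $(f,e_h)$ via the PDE and elementwise integration by parts, bound the linear interpolation-residual terms as in Theorem~\ref{thm: H^2 error estimate of linearization}, and treat the determinant difference with a mean-value/cofactor identity whose cofactor factor is bounded by $C\eps^{1-d}$ before invoking coercivity. The only cosmetic difference is that you justify the $L^\infty$ bound on the intermediate cofactor matrix explicitly via $W^{2,\infty}$-stability of interpolation, where the paper cites \cite[Lemma 4.1]{feng2011conforming}.
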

\begin{proof}
Let $w_h^{\eps} = u_I^{\eps} - T(u_I^{\eps})$. Then, we have
\begin{equation}
	\begin{aligned}
		A_h^{\sigma}( w_h^{\eps}, w_h^{\eps} ) = & \eps\Big( \Delta u_I^{\eps}, \Delta w_h^{\eps}\Big)_{L^2(\Omega)} - \big( \det(D^2u_I^{\eps}), w_h^{\eps} \big)_{L^2(\Omega)} \\
	& + b_h^{\sigma} (u_I^{\eps} , w_h^{\eps}) + (f,w_h^{\eps})_{L^2(\Omega)} - (\eps^2, \nabla w_h^{\eps}\cdot {\bm n})_{L^2(\partial\Omega)}.
	\end{aligned}
\end{equation}
Using the definition of the model equation \eqref{equ: model equations}, we can derive that:
\begin{equation}
	\begin{aligned}
		(f,w_h^{\eps})_{L^2(\Omega)} & = -\eps(\Delta^2 u^{\eps}, w_h^{\eps})_{L^2(\Omega)} + (\det(D^2u^{\eps}), w_h^{\eps})_{L^2(\Omega)} \\
		& = \eps (\nabla \Delta u^{\eps}, \nabla w_h^{\eps})_{L^2(\Omega)} 
		  + (\det(D^2u^{\eps}), w_h^{\eps})_{L^2(\Omega)} \\
		& = -\eps (\Delta u^{\eps}, \Delta w_h^{\eps})_{L^2(\T_h)} + \eps\sum_{F \in \mathcal{F}_h^{i}}(\Delta u^{\eps}, \lj \nabla w_h^{\eps}\rj )_{L^2(F)} \\
		 & + (\eps^2, \nabla w_h^{\eps}\cdot {\bm n})_{L^2(\partial\Omega)} 
		  + (\det(D^2u^{\eps}), w_h^{\eps})_{L^2(\Omega)}.
	\end{aligned}
\end{equation}
Combining the above two equalities, utilizing the trace inequality, and using the same estimation techniques in Theorem \ref{thm: H^2 error estimate of linearization}, along with the interpolation result \eqref{equ: approximaiton of interpolation}, we obtain
	\begin{equation}\label{equ: bound of a for fixed point}
	\begin{aligned}
		A_h^{\sigma}( w_h^{\eps}, w_h^{\eps} )  = & \eps\Big(\Delta u_I^{\eps} - \Delta u^{\eps}, \Delta w_h^{\eps}\Big)_{L^2(\T_h)} + \eps\sum_{F \in \mathcal{F}_h^{i}}(\Delta u^{\eps}, \lj \nabla w_h^{\eps} \rj )_{L^2(F)} \\
		& \quad + \big( \det(D^2u^{\eps}) - \det(D^2u_I^{\eps}), w_h^{\eps} \big)_{L^2(\T_h)}  +  b_h^{\sigma} (u_I^{\eps} , w_h^{\eps})\\
		\le & C \eps \left(1 + \sigma(1 + \eps^{-4}) \right) h^{\min(k-1, s-2)} |u^{\eps}|_{H^s(\Omega)}\|w_h^{\eps}\|_{h}  \\
		&+\big( \det(D^2u^{\eps}) 
		 - \det(D^2u_I^{\eps}), w_h^{\eps} \big)_{L^2(\T_h)}.
		\end{aligned}
\end{equation}

To bound the last term in \eqref{equ: bound of a for fixed point}, we apply the Mean Value Theorem and deduce that 
	\begin{equation*}
		\big( \det(D^2u^{\eps}) - \det(D^2u_I^{\eps}), w_h^{\eps} \big)_{L^2(\T_h)} = \Big(\Psi^{\eps}:(D^2u^{\eps} - D^2u_I^{\eps}), w_h^{\eps}\Big)_{L^2(\T_h)}, 
	\end{equation*}
	where $\Psi^{\eps} = \text{cof}(D^2u^{\eps} - \theta(D^2u^{\eps} - D^2u_I^{\eps} ))$ for some $\theta \in [0,1]$. 
Using \cite[Lemma 4.1]{feng2011conforming} and the priori estimate \eqref{equ: priori bounds for u^eps}, we have 
	\begin{equation*}
		\|\Psi^{\eps}\|_{L^{\infty}(\Omega)} \le C\|D^2u^{\eps}\|_{L^{\infty}(\Omega)}^{d-1} \le C \eps^{1-d}.
	\end{equation*}
Combining the above two estimates with \eqref{equ: bound of a for fixed point}, we obtain:
	\begin{align*}
		A_h^{\sigma}( w_h^{\eps}, w_h^{\eps} ) \le  &C \eps \left(1 + \sigma(1 + \eps^{-4}) \right) h^{\min(k-1, s-2)} |u^{\eps}|_{H^s(\Omega)}\|w_h^{\eps}\|_{h}  \\
		& + C \eps^{1-d} h^{\min(k-1, s-2)} |u^{\eps}|_{H^s(\Omega)}\|w_h^{\eps}\|_{h}.
	\end{align*}
	The desired result is derived from the coercivity \eqref{equ:coercivity}. 

\end{proof}

Next, we introduce a lemma to demonstrate the contraction property of the operator $T$
\begin{lemma}\label{lem:contraction property}
	For any $v_h, w_h \in \mathbb{B}_h(\rho)$ there holds
	\begin{equation}
		\|T(v_h) - T(w_h)\|_{h} \le C(h,\rho,\eps) \|v_h - w_h\|_{h},
	\end{equation}
	where $C(h,\rho,\eps) = C_2 \eps^{1 -d}\Big( \eps^{-1} h^{\min(k-1, s-2)}  + \rho \Big)$, for some constant $C_2 > 0$. 
\end{lemma}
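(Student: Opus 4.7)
The plan is to subtract the defining relations \eqref{equ:fixed poiont map} for $T(v_h)$ and $T(w_h)$, use a mean-value identity to linearize the difference of determinants, and then combine the coercivity \eqref{equ:coercivity} with the discrete Sobolev inequality \eqref{equ:discrete sobolev inequaltiy}.

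Setting $\phi_h := T(v_h) - T(w_h)$ and subtracting yields, for every $z_h \in V_h^0$, the relation
\[
A_h^{\sigma}(v_h - w_h - \phi_h, z_h) = \eps(\Delta(v_h - w_h),\Delta z_h)_{L^2(\mathcal{T}_h)} + b_h^{\sigma}(v_h - w_h, z_h) - (\det(D^2 v_h) - \det(D^2 w_h), z_h)_{L^2(\mathcal{T}_h)}.
\]
A direct inspection of \eqref{equ:discrete bilinear form}--\eqref{equ:stablized discrete bilinear form} produces the algebraic identity $A_h^{\sigma}(u,z) - \eps(\Delta u,\Delta z)_{L^2(\mathcal{T}_h)} - b_h^{\sigma}(u,z) = -(\Phi^{\eps}:D^2 u, z)_{L^2(\mathcal{T}_h)}$, so after cancelling the fourth-order and stabilization terms the equation reduces to
\[
A_h^{\sigma}(\phi_h, z_h) = \bigl((\tilde{\Psi}_h - \Phi^{\eps}):D^2(v_h - w_h), z_h\bigr)_{L^2(\mathcal{T}_h)},
\]
where $\tilde{\Psi}_h := \int_0^1 \mathrm{cof}\bigl(D^2(tv_h+(1-t)w_h)\bigr)\,dt$ is obtained from the fundamental theorem of calculus applied to $t \mapsto \det\bigl(D^2(tv_h+(1-t)w_h)\bigr)$.

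The next step is to test with $z_h = \phi_h$, bound the left-hand side below by $C\eps\|\phi_h\|_h^2$ via coercivity, and bound the right-hand side by H\"older's inequality with exponents $(2,2,\infty)$, transferring the $L^{\infty}$ control on $\phi_h$ to $\|\phi_h\|_h$ through the discrete Sobolev inequality \eqref{equ:discrete sobolev inequaltiy}. This reduces the lemma to proving the cofactor estimate
\[
\|\tilde{\Psi}_h - \Phi^{\eps}\|_{L^2(\Omega)} \le C\eps^{2-d}\bigl(\rho + \eps^{-1} h^{\min(k-1,s-2)}\bigr).
\]
Since entries of $\mathrm{cof}(\cdot)$ are polynomials of degree $d-1$ in the matrix entries, a routine algebraic expansion yields $\|\mathrm{cof}(A) - \mathrm{cof}(B)\|_{L^2(\mathcal{T}_h)} \le C(\|A\|_{L^\infty} + \|B\|_{L^\infty})^{d-2}\|A-B\|_{L^2(\mathcal{T}_h)}$; applied with $A = D^2(tv_h+(1-t)w_h)$ and $B = D^2 u^{\eps}$ and integrated in $t$, the factor $\|B\|_{L^\infty} \le C\eps^{-1}$ is immediate from \eqref{equ: priori bounds for u^eps}, while $\|A\|_{L^{\infty}(\mathcal{T}_h)}$ is controlled by splitting $A = D^2 u_I^{\eps} + D^2(tv_h+(1-t)w_h - u_I^{\eps})$ and using $W^{2,\infty}$ stability of the Lagrange interpolant together with the inverse inequality applied to the discrete remainder (which lies in $\mathbb{B}_h(\rho)$). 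Finally, $\|A-B\|_{L^2(\mathcal{T}_h)}$ is dominated by $C(\rho + h^{\min(k-1,s-2)}|u^{\eps}|_{H^s(\Omega)}) \le C(\rho + \eps^{-1}h^{\min(k-1,s-2)})$ via \eqref{equ: approximaiton of interpolation} and \eqref{equ: priori bounds for u^eps}.

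The main obstacle is keeping $\|D^2(tv_h+(1-t)w_h)\|_{L^\infty(\mathcal{T}_h)}$ of the order $\eps^{-1}$: the inverse inequality contributes a term of size $Ch^{-d/2}\rho$, which forces an implicit a priori smallness condition on $\rho$ (to be imposed in the main fixed-point theorem that invokes this lemma). This is the only place in which the three-dimensional case is genuinely harder than the two-dimensional one, since for $d=2$ the cofactor is linear and no sup-norm bound on $D^2(tv_h+(1-t)w_h)$ is required.
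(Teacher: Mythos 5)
Your proof is correct and follows essentially the same route as the paper: subtract the two defining relations of $T$, linearize the determinant difference via a mean-value/cofactor identity to arrive at $A_h^{\sigma}(T(v_h)-T(w_h),\eta_h)=\big((\tilde{\Psi}_h-\Phi^{\eps}):D^2(v_h-w_h),\eta_h\big)_{L^2(\mathcal{T}_h)}$, and close with the coercivity \eqref{equ:coercivity} together with the discrete Sobolev inequality \eqref{equ:discrete sobolev inequaltiy}. The only substantive difference is that where the paper simply cites \cite[Lemma 4.3]{feng2011conforming} for the bound $\|\Psi_h-\Phi^{\eps}\|_{L^2(\mathcal{T}_h)}\le C\eps^{2-d}\big(h^{\min(k-1,s-2)}\|u^{\eps}\|_{H^2(\Omega)}+\rho\big)$, you prove it directly, and in doing so you correctly expose the implicit smallness requirement on $\rho$ (the $h^{-d/2}\rho$ contribution from the inverse estimate needed to control $\|D^2(tv_h+(1-t)w_h)\|_{L^{\infty}}$ when $d=3$) that the citation conceals and that is ultimately satisfied by the choice of $\rho_0$ in the main theorem.
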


\begin{proof}
	Let $z_h = v_h - w_h$. Using the definition of $T$ and \eqref{equ:stablized discrete bilinear form}, we have, for any $\eta_h \in V_h^0$:
	\begin{align*}
		&A_h^{\sigma}( T(v_h) - T(w_h), \eta_h )\\  = & \Big( \det(D^2 v_h) - \det(D^2 w_h), \eta_h   \Big)_{L^2(\T_h)}  - \Big( \Phi^{\eps}: D^2 z_h,  \eta_h \Big)_{L^2(\T_h)} \\
		 = & (\Psi_h:D^2 z_h, \eta_h)_{L^2(\T_h)}  - \Big( \Phi^{\eps}: D^2 z_h,  \eta_h \Big)_{L^2(\T_h)},
	\end{align*}
	where we have utilized the Mean Value Theorem in the last equality, and $\Psi_h = \text{cof}(D^2w_h + \theta(D^2v_h - D^2w_h) )$ for some $\theta \in [0,1]$.
	Next, we rewrite the above expression as
	\begin{align*}
		A_h^{\sigma}( T(v_h) - T(w_h), \eta_h )= \Big( (\Psi_h - \Phi^{\eps}):D^2z_h, \eta_h \Big)_{L^2(\T_h)} .
	\end{align*}
	To estimate $A_h^{\sigma}( T(v_h) - T(w_h), \eta_h )$, we first quote the following estimate from \cite[Lemma 4.3]{feng2011conforming}
		\begin{align*}
		\|\Psi_h - \Phi^{\eps}\|_{L^2(\T_h)}  \le C \eps^{2 -d} (h^{\min(k-1, s-2)}  \|u^{\eps}\|_{H^2(\Omega)} + \rho ).
	\end{align*}
	Using the above estimate, the Cauchy-Schwartz inequality, and the discrete Sobolev inequality \eqref{equ:discrete sobolev inequaltiy}, we can deduce that
	\begin{align*}
		A_h^{\sigma}( T(v_h) - T(w_h), \eta_h ) & \le C \eps^{2 -d} (h^{\min(k-1, s-2)}  \|u^{\eps}\|_{H^3(\Omega)} + \rho ) \|z_h\|_{h} \|\eta_h \|_{L^{\infty}(\Omega)} \\
		&\le C \eps^{2 -d} (h^{\min(k-1, s-2)}  \|u^{\eps}\|_{H^3(\Omega)} + \rho ) \|z_h\|_{h} \|\eta_h \|_{h}.
	\end{align*}
The desired result follows by setting $\eta_h = T(v_h)-T(w_h)$ and applying the 
coercivity \eqref{equ:coercivity}.
\end{proof}

Now, we are well-prepared to establish the well-posedness of our numerical scheme and derive the $H^2$ error estimate.
\begin{theorem}
Suppose $u^{\epsilon}\in H^s(\Omega)$. Then, there exists a unique $u_h^{\eps} \in V_h^g$ satisfying \eqref{equ: numerical scheme for model problem}, with the error estimates given by
    \begin{equation}
        \|u^{\eps} - u_h^{\eps}\|_h \le C_4 \eps^{-1} \left( 1 + \eps^{-d} + \sigma(1 + \eps^{-4}) \right)h^{\min(k-1, s-2)}\|u\|_{H^s(\Omega)}.
    \end{equation}
\end{theorem}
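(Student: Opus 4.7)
The plan is to combine Lemma~\ref{lem:sit on the ball} and Lemma~\ref{lem:contraction property} through a Banach fixed-point argument applied to the map $T$ defined in \eqref{equ:fixed poiont map}, on the closed ball $\mathbb{B}_h(\rho)$ introduced in \eqref{equ:neighborhood}. Fixed points of $T$ coincide with solutions of the discrete scheme \eqref{equ: numerical scheme for model problem}, so existence, uniqueness, and the error bound will all be established simultaneously once we locate such a fixed point near $u_I^{\eps}$. The natural choice of radius is
\begin{equation*}
\rho := 2 C_1 \eps^{-1}\left(1 + \eps^{-d} + \sigma(1+\eps^{-4})\right) h^{\min(k-1,s-2)} \|u^{\eps}\|_{H^s(\Omega)},
\end{equation*}
where $C_1$ is the constant from Lemma~\ref{lem:sit on the ball}.

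First I would verify the self-mapping property $T(\mathbb{B}_h(\rho)) \subset \mathbb{B}_h(\rho)$. For $v_h \in \mathbb{B}_h(\rho)$, the triangle inequality gives
\begin{equation*}
\|T(v_h) - u_I^{\eps}\|_h \le \|T(v_h) - T(u_I^{\eps})\|_h + \|T(u_I^{\eps}) - u_I^{\eps}\|_h \le C(h,\rho,\eps)\,\rho + \tfrac{1}{2}\rho,
\end{equation*}
so the self-mapping property holds as soon as $C(h,\rho,\eps) \le \tfrac{1}{2}$. The same smallness condition, via Lemma~\ref{lem:contraction property}, also makes $T$ a strict contraction on $\mathbb{B}_h(\rho)$. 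Banach's fixed-point theorem then produces a unique $u_h^{\eps} \in \mathbb{B}_h(\rho)$ with $T(u_h^{\eps}) = u_h^{\eps}$, which in turn solves \eqref{equ: numerical scheme for model problem}. Uniqueness in the full space $V_h^g$ requires a separate comment: two solutions sufficiently close to $u_I^{\eps}$ must lie in $\mathbb{B}_h(\rho)$, and the contraction forces them to coincide.

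The error estimate then follows from a final triangle-inequality split
\begin{equation*}
\|u^{\eps} - u_h^{\eps}\|_h \le \|u^{\eps} - u_I^{\eps}\|_h + \|u_I^{\eps} - u_h^{\eps}\|_h,
\end{equation*}
where the first term is controlled by the standard interpolation bound \eqref{equ: approximaiton of interpolation} and the second is at most $\rho$ by construction. Both contributions are of order $h^{\min(k-1,s-2)}$ with the stated $\eps$- and $\sigma$-dependent prefactor, matching the claim.

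The main obstacle is the smallness condition $C(h,\rho,\eps) = C_2 \eps^{1-d}\big(\eps^{-1} h^{\min(k-1,s-2)} + \rho\big) \le \tfrac{1}{2}$. Since $\rho$ itself scales like $\eps^{-1}(1+\eps^{-d}+\sigma(1+\eps^{-4})) h^{\min(k-1,s-2)}$, plugging in reveals that the condition reduces to requiring $h$ small compared to a fixed negative power of $\eps$. This is the usual mesh--regularization coupling in fixed-point analyses of vanishing moment schemes (as in \cite{feng2009mixed, feng2011conforming}), and while routine to state, it is the one place where a genuine restriction on $(h,\eps)$ must be imposed; everything else in the argument is an assembly of the two preceding lemmas, the coercivity estimate \eqref{equ:coercivity}, and the interpolation bound.
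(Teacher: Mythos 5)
Your proposal is correct and follows essentially the same route as the paper: the same radius $\rho$, the same self-mapping and contraction verification via Lemmas~\ref{lem:sit on the ball} and~\ref{lem:contraction property} under a smallness condition $h<h_0(\eps)$, and the same concluding triangle inequality with the interpolation bound \eqref{equ: approximaiton of interpolation}. The only cosmetic difference is that you invoke Banach's fixed-point theorem where the paper cites Brouwer's; since the argument genuinely uses the contraction property to obtain uniqueness, your attribution is if anything the more accurate one.
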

\begin{proof}
    Let $\rho_0 = 2C_1 \eps^{-1} \left( 1 + \eps^{-d} + \sigma(1 + \eps^{-4}) \right)h^{\min(k-1, s-2)}\|u\|_{H^s(\Omega)}$. Then, the constant $C(h, \rho_0, \epsilon)$ becomes
    \begin{equation}
    \begin{aligned}    	
    	C(h, \rho_0, \epsilon) =  C_2 \eps^{ -d}\Big( 1   +  2C_1  \left( 1  + \eps^{-d} + \sigma(1 + \eps^{-4}) \right)\|u\|_{H^2(\Omega)} \Big)h^{\min(k-1, s-2)}.
    \end{aligned}
    \end{equation}
We choose $h_0$ such that for $h<h_0$, we have
    \begin{equation}
    	C(h, \rho_0, \epsilon) < \frac{1}{2}
    \end{equation}
According to Lemma \ref{lem:contraction property}, for $v_h,w_h \in \mathbb{B}_h(\rho_0)$ and $h<h_0$, we then have
 	\begin{align} \label{equ:conntraction inequality}
		\|T(v_h) - T(w_h)\|_{h} \le \frac{1}{2} \|v_h - w_h\|_{h}. 
	\end{align} 
	Furthermore, Lemma \ref{lem:sit on the ball} and \eqref{equ:conntraction inequality} imply that for any $z_h\in \mathbb{B}_h(\rho_0)$
	\begin{align*}
		&\|T(z_h) - u_I^{\eps}\|_{h} \\ 
		 \le & \|T(z_h) - T(u_I^{\eps})\|_{h} + \|T(u_I^{\eps}) - u_I^{\eps}\|_{h} \\
		 \le &\frac{1}{2}  \|z_h - u_I^{\eps}\|_{h} +   C_1 \eps^{-1} \left( 1 + \eps^{-d} + \sigma(1 + \eps^{-4}) \right)h^{\min(k-1, s-2)}\|u\|_{H^s(\Omega)}  \\
		 \le & \frac{1}{2} \rho_0 +  \frac{1}{2}\rho_0  = \rho_0.
	\end{align*}
	It means that $T(z_h)\in \mathbb{B}_h(\rho_0)$.
By the Brouwer's Fixed Point Theorem \cite{evans2010}, there exists a unique $u_h^{\eps} \in \mathbb{B}_h(\rho_0)$ such that $T(u_h^{\eps}) = u_h^{\eps}$, which means $u_h^{\eps}$ is a unique solution of the numerical scheme. To obtain the $H^2$ error estimate, we apply the triangle inequality and deduce that
	\begin{equation}
		\begin{aligned}
			\|u^{\epsilon} - u^{\epsilon}_h\|_h \le  & \|u^{\epsilon} - u^{\epsilon}_I\|_h  + \|u^{\epsilon}_I - u^{\epsilon}_h\|_h \\
		 \le 	&  Ch^{\min(k-1, s-2)}\|u\|_{H^s(\Omega)} + \rho_0 \\
		 \le & C_4 \eps^{-1} \left( 1 + \eps^{-d} + \sigma(1 + \eps^{-4}) \right)h^{\min(k-1, s-2)}\|u\|_{H^s(\Omega)}.
		\end{aligned}
	\end{equation}
This concludes the proof.
	\end{proof}

\section{Numerical experiments}\label{sec:num}
In this section, we provide a series of numerical examples to demonstrate the performance of the proposed finite element methods and validate the theoretical results.  We use Newton's method as the nonlinear solver. All of the tests given below are computed on the domain $\Omega = (0,1)^d$.

\subsection{Two dimensional numerical experiments} In this subsection, we consider the two dimensional numerical experiments. 

\begin{table}[htb!]
\centering
\caption{Numerical errors of numerical test I with fixed $h = 0.01$.  }\label{tab:varepsilon}
\resizebox{0.99\textwidth}{!}{
\begin{tabular}{|c|c|c|c|c|c|c|c|c|c|}
\hline 
Degree &$\epsilon$ & $\|u-u^h\|_{L^2(\Omega)} $ & Order & $\|u-u^h\|_{H^1(\Omega)}$&Order  & $\|u-u^h\|_{H^2(\mathcal{T}_h)}$ & Order \\ \hline
\multirow{5}{*}{k=2}     & 5.00e-01 &1.39e-01&--&6.48e-01&--&3.53e+00&--\\ \cline{2-8}
 & 2.50e-01 &1.08e-01&0.37&5.12e-01&0.34&3.17e+00&0.15\\ \cline{2-8}
 & 1.25e-01 &8.85e-02&0.28&4.29e-01&0.26&2.94e+00&0.11\\ \cline{2-8}
 & 5.00e-02 &5.39e-02&0.54&2.78e-01&0.47&2.50e+00&0.18\\ \cline{2-8}
 & 2.50e-02 &3.21e-02&0.75&1.82e-01&0.61&2.16e+00&0.21\\ \cline{2-8}
 & 1.25e-02 &1.79e-02&0.85&1.16e-01&0.66&1.85e+00&0.22\\ \cline{2-8}
 & 5.00e-03 &7.76e-03&0.91&6.16e-02&0.69&1.48e+00&0.24\\ \cline{2-8}
 & 2.50e-03 &3.98e-03&0.96&3.74e-02&0.72&1.24e+00&0.26\\ \hline
\multirow{5}{*}{k=3}     & 5.00e-01 &1.39e-01&--&6.49e-01&--&3.55e+00&--\\ \cline{2-8}
 & 2.50e-01 &1.08e-01&0.37&5.12e-01&0.34&3.20e+00&0.15\\ \cline{2-8}
 & 1.25e-01 &8.86e-02&0.28&4.29e-01&0.26&2.97e+00&0.11\\ \cline{2-8}
 & 5.00e-02 &5.39e-02&0.54&2.79e-01&0.47&2.52e+00&0.18\\ \cline{2-8}
 & 2.50e-02 &3.22e-02&0.74&1.83e-01&0.61&2.19e+00&0.21\\ \cline{2-8}
 & 1.25e-02 &1.80e-02&0.84&1.16e-01&0.65&1.88e+00&0.22\\ \cline{2-8}
 & 5.00e-03 &7.88e-03&0.90&6.23e-02&0.68&1.52e+00&0.23\\ \cline{2-8}
 & 2.50e-03 &4.13e-03&0.93&3.84e-02&0.70&1.28e+00&0.24\\ \hline
\end{tabular}}
\end{table}

\subsubsection{Numerical Example I}
In this example, we investigate the approximation of the perturbed equation \eqref{equ: model equations} to the fully nonlinear  Monge-Amp\`{e}re equation \eqref{equ: MA equation}. We choose $f(x,y) = (1+x^2+y^2)e^{(x^2+y^2)/2}$ and $g(x,y) = e^{(x^2+y^2)/2}$ so that  $u(x,y) =  e^{(x^2+y^2)/2}$ is the unique classical solution of \eqref{equ: MA equation}.

In the first test, we fix the mesh size $h = 0.01$ and run the tests for varying $\epsilon$. Given the small $h$,  $\|u- u^{\epsilon}_h\|$ be considered an accurate estimate of $\|u- u^{\epsilon}\|$. Table \ref{tab:varepsilon} provides the results obtained from the simulation using quadratic and cubic elements. It can be seen from the data in Table \ref{tab:varepsilon} that the $\|u-u^h\|_{L^2(\Omega)} \approx \mathcal{O}(\epsilon)$, $\|u-u^h\|_{H^1(\Omega)}\approx\mathcal{O}(\epsilon^{0.75})$, and $\|u-u^h\|_{H^2(\mathcal{T}_h)}\approx\mathcal{O}(\epsilon^{0.25})$ for both cases. It suggests that $u^{\epsilon}$ converges to $u$ in $H^2$ norm at rate of $\mathcal{O}(\epsilon^{0.25})$.

Next, we investigate the relationship between  $\epsilon$  and  $h$  to determine the "optimal" choice of $h$ that he global error $\|u- u^{\epsilon}_h\|$ is the same order  as that of $\|u- u^{\epsilon}\|$. We fit the constant in $y = \beta \epsilon^{\alpha}$ using the data, where $\alpha = 0.25$ for the discrete $H^2$ norm, $\alpha = 0.75$ for the $H^1$ norm, and $\alpha = 1$ for the $L^2$ norm.  The numerical results for quadratic and cubic elements are presented in Figure \ref{fig:errrel} with $h = \sqrt{\epsilon}$.  From the graph, we can see that the fitted curves match the data. This implies that $h = \sqrt{\epsilon}$ is the best choice of $h$ in terms of $\epsilon$.

\begin{figure}[!h]
\captionsetup[subfigure]{labelformat=empty}
   \centering
   \subcaptionbox{$L^2$ error \label{fig:l2err_p2}}
  {\includegraphics[width=0.49\textwidth]{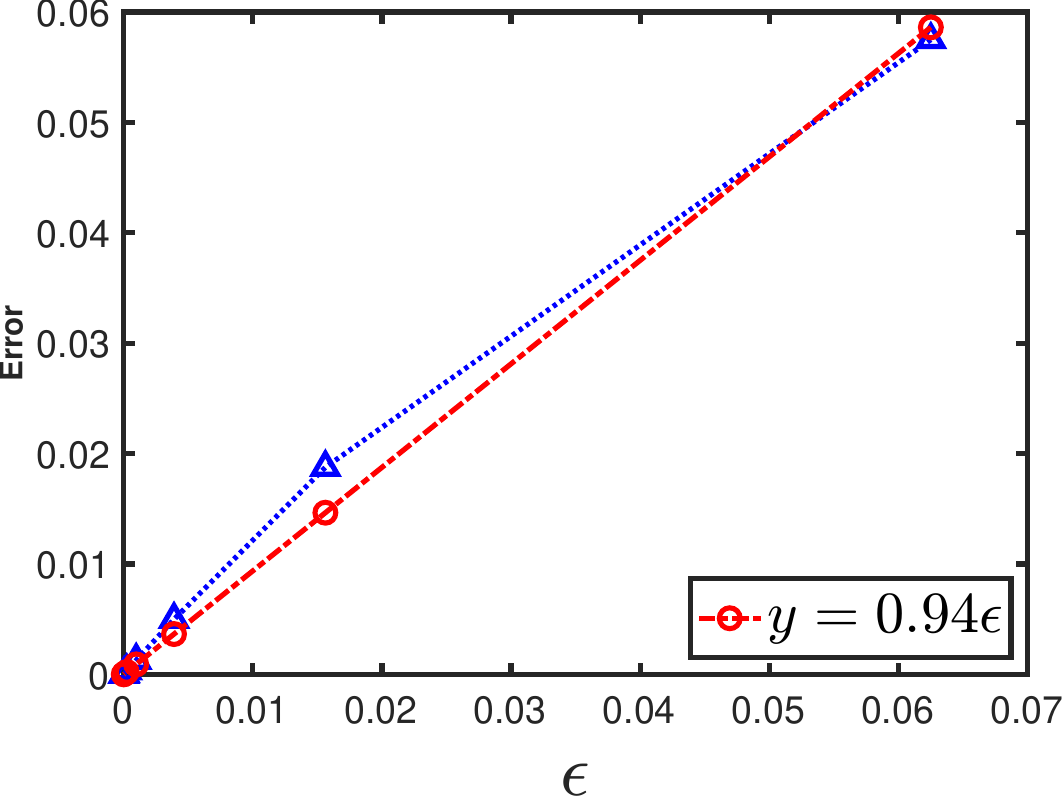}}
  \subcaptionbox{ $L^2$ error\label{fig:l2err_p3}}
   {\includegraphics[width=0.49\textwidth]{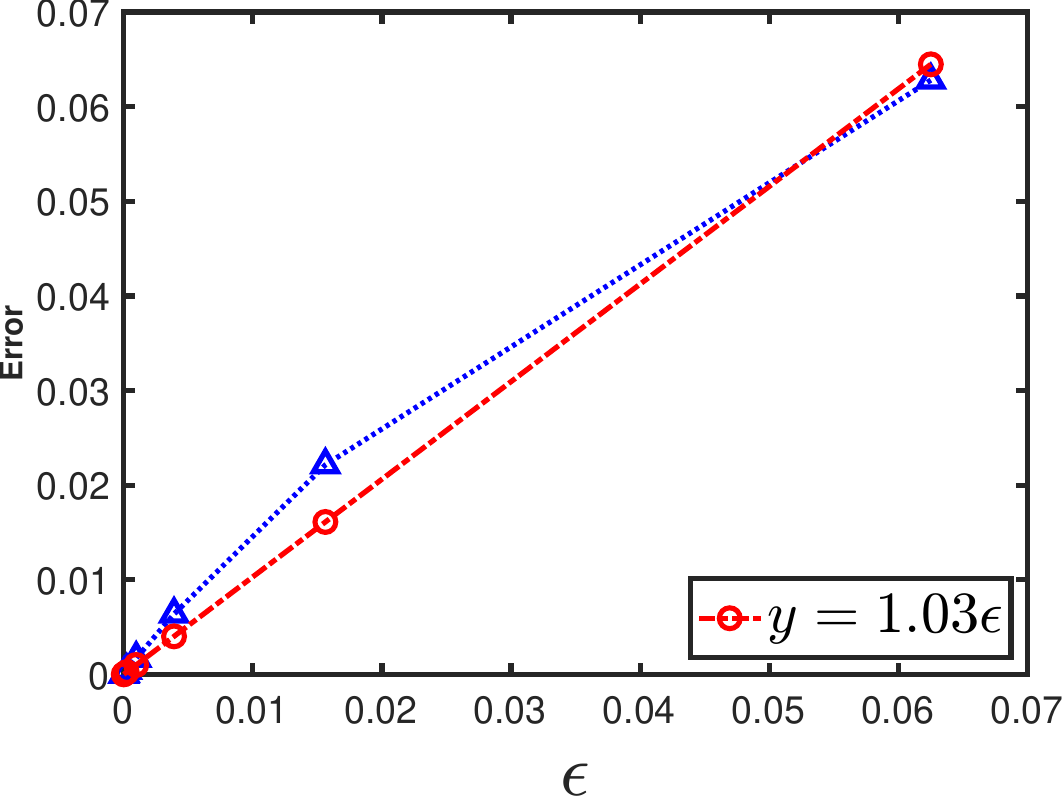}}
    \subcaptionbox{$H^1$ error \label{fig:h1err_p2}}
  {\includegraphics[width=0.49\textwidth]{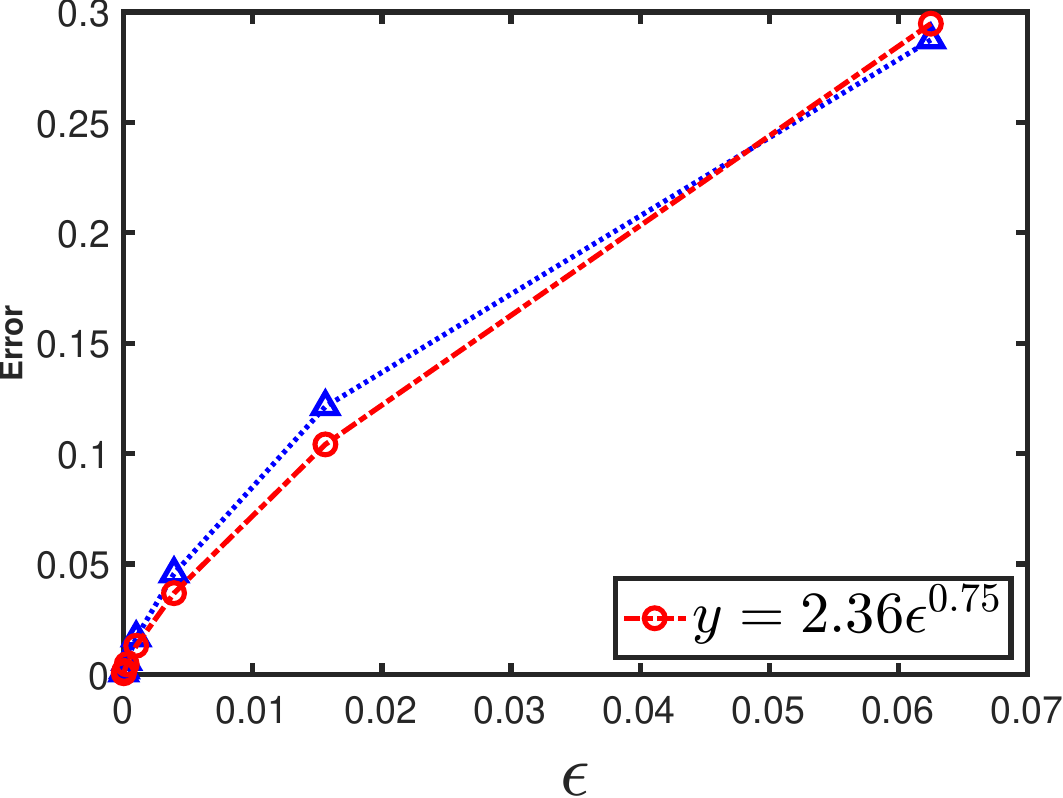}}
  \subcaptionbox{ $H^1$ error\label{fig:h1err_p3}}
   {\includegraphics[width=0.49\textwidth]{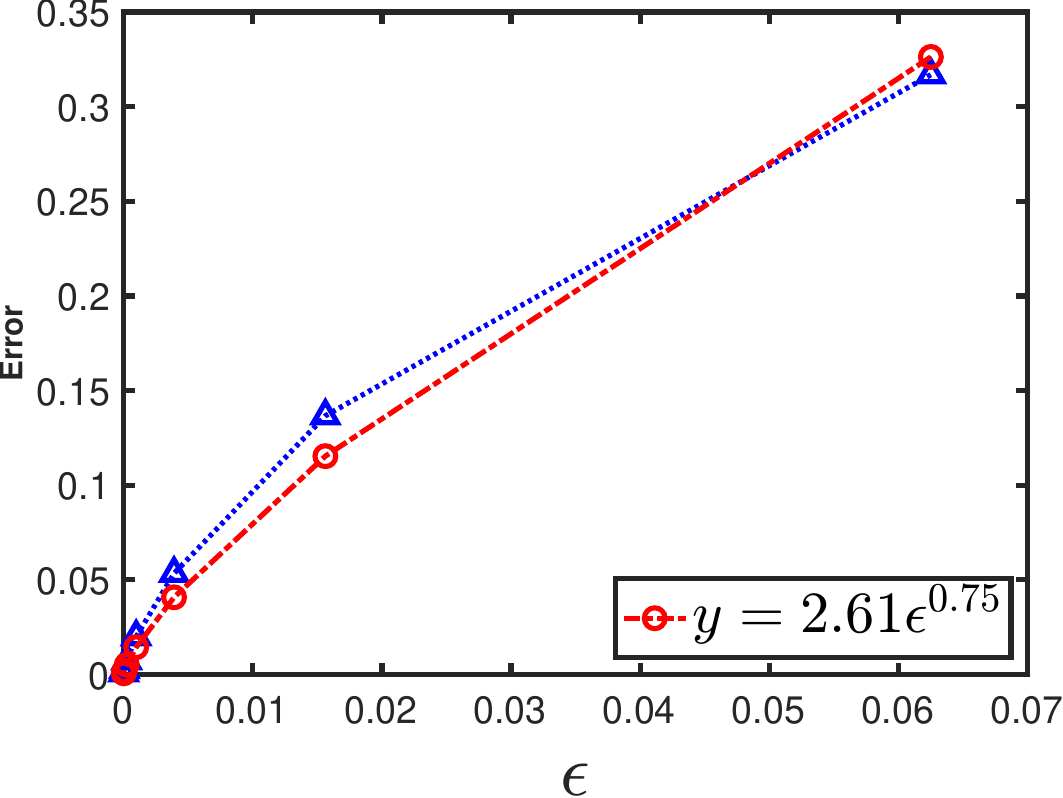}}
      \subcaptionbox{Discrete $H^2$ error \label{fig:h2err_p2}}
  {\includegraphics[width=0.49\textwidth]{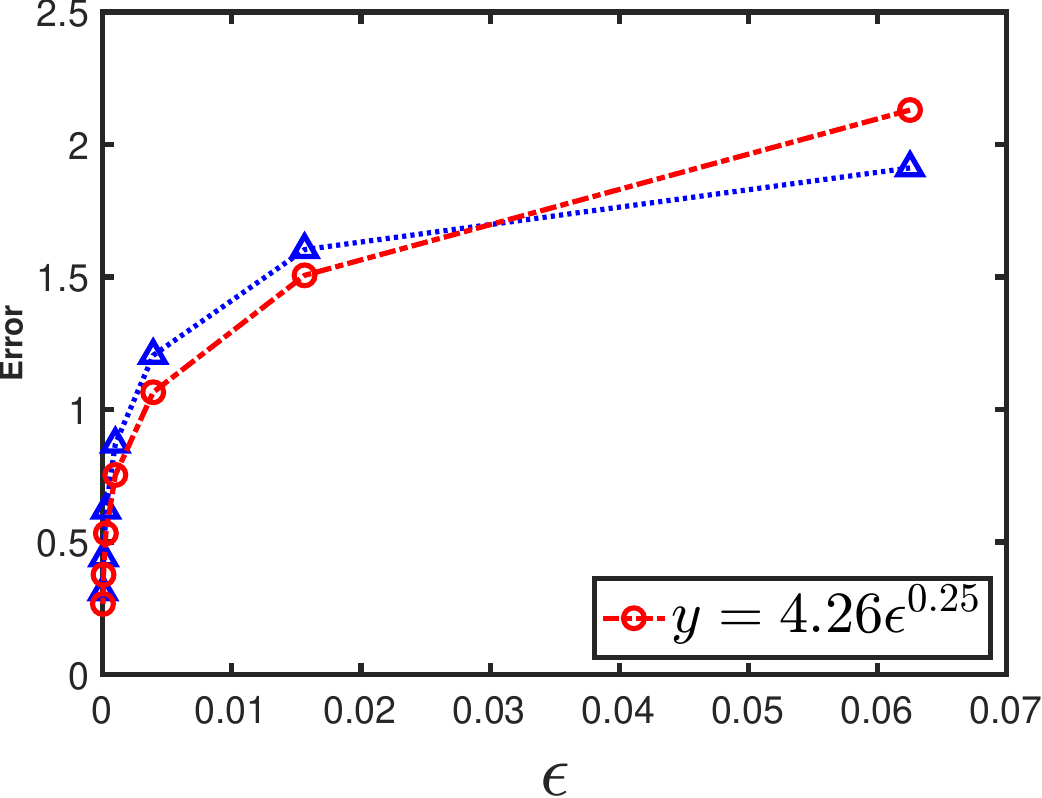}}
  \subcaptionbox{ Discrete $H^2$ error\label{fig:h2err_p3}}
   {\includegraphics[width=0.49\textwidth]{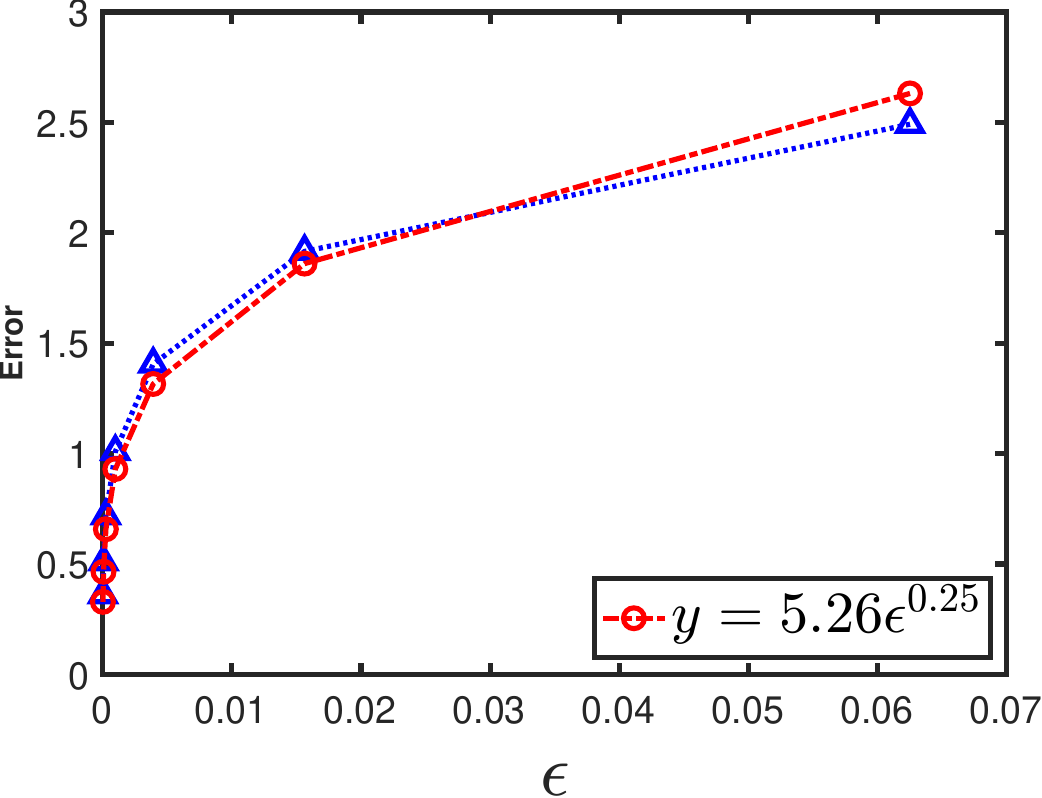}}
   \caption{Plot of error with respect to $\epsilon$.  The first column  is the numerical result for quadratic element  and the second column  is the numerical result of cubic element.}\label{fig:errrel}
\end{figure}

\subsubsection{Numerical example II} In this example, we analyze the rate of convergence for $\| u^{\epsilon} - u^{\epsilon}_h\|$ for fixed $\epsilon = 0.01$, where $u^{\epsilon}$ is the solution of \eqref{equ: model equations}.  We solve the equation \ref{equ: model equations} with the boundary condition $\Delta u^{\epsilon} = \psi^{\epsilon} $ instead of $\Delta u^{\epsilon} = \epsilon $. We choose $f^{\epsilon} = x^2y^2-4\epsilon$, $g^{\epsilon} = \frac{1}{12}(x^4+y^4)$, and $\psi^{\epsilon} = x^2 + y^2$ to fit the exact solution $u^{\epsilon} = \frac{1}{2}(x^4+y^4)$.

\begin{table}[htb!]
\centering
\caption{Numerical errors of numerical test II with fixed $\epsilon  = 0.01$.  }\label{tab:varh}
\resizebox{0.99\textwidth}{!}{
\begin{tabular}{|c|c|c|c|c|c|c|c|c|c|}
\hline 
Degree &$h$ & $\|u-u^h\|_{L^2(\Omega)} $ & Order & $\|u-u^h\|_{H^1(\Omega)}$&Order  & $\|u-u^h\|_{H^2(\mathcal{T}_h)}$ & Order \\ \hline
\multirow{5}{*}{k=2}  
  & 1/8 &3.98e-04&--&2.05e-03&--&5.92e-02&--\\ \cline{2-8}
 & 1/16 &8.04e-05&2.31&4.36e-04&2.23&2.95e-02&1.00\\ \cline{2-8}
 & 1/32 &1.88e-05&2.09&1.04e-04&2.07&1.47e-02&1.00\\ \cline{2-8}
 & 1/64 &4.63e-06&2.03&2.57e-05&2.02&7.37e-03&1.00\\ \cline{2-8}
 & 1/128 &1.15e-06&2.01&6.40e-06&2.01&3.68e-03&1.00\\   \hline 
\multirow{5}{*}{k=3}  
   & 1/8 &2.52e-05&--&1.16e-04& -- &1.64e-03& -- \\ \cline{2-8}
 & 1/16 &5.94e-06&2.09&2.71e-05&2.10&4.08e-04&2.01\\ \cline{2-8}
 & 1/32 &1.46e-06&2.02&6.64e-06&2.03&1.02e-04&2.00\\ \cline{2-8}
 & 1/64 &3.64e-07&2.01&1.65e-06&2.01&2.55e-05&2.00\\ \cline{2-8}
 & 1/128 &9.16e-08&1.99&4.16e-07&1.99&6.37e-06&2.00\\  \hline
\end{tabular}}
\end{table}

The numerical results  for quadratic and cubic elements are summarized in Table \ref{tab:varh}. 
Looking at the table, it is apparent that the optimal convergence order of $\mathcal{O}(h^{k-1})$ can be observed for both quadratic and cubic elements. This finding is consistent with the theoretical results presented in our theorem. 
What's interesting about the data in this table is that we only observe a suboptimal order of $\mathcal{O}(h^{k-1})$ for the $L^2$ error and $H^1$ error for cubic elements, contrasting with the optimal convergence rate of $\mathcal{O}(h^k)$ for quadratic elements. This is despite the error for cubic elements being several digits in magnitude less than the corresponding error for quadratic elements.

\subsubsection{Numerical Example III} In the numerical example, we test the capability of the proposed methods for computing viscosity solution. 
 Similar to \cite{neilan2010nonconforming}, we choose $f=1$ and $g=0$. 
As demonstrated in \cite{guti2016ma}, the  Monge-Amp\`{e}re equation \eqref{equ: MA equation} admits a unique viscosity solution but does not have a classical solution.  We simulated the numerical solution using quadratic element with $\epsilon = 0.005$ and $h = 1/32$.  The simulation result is visulized in Figure \ref{fig:vissol}.  As can be seen from the figure,  the proposed numerical methods can track the viscosity convex solution.

\begin{figure}[!h]
   \centering
  \includegraphics[width=0.65\textwidth]{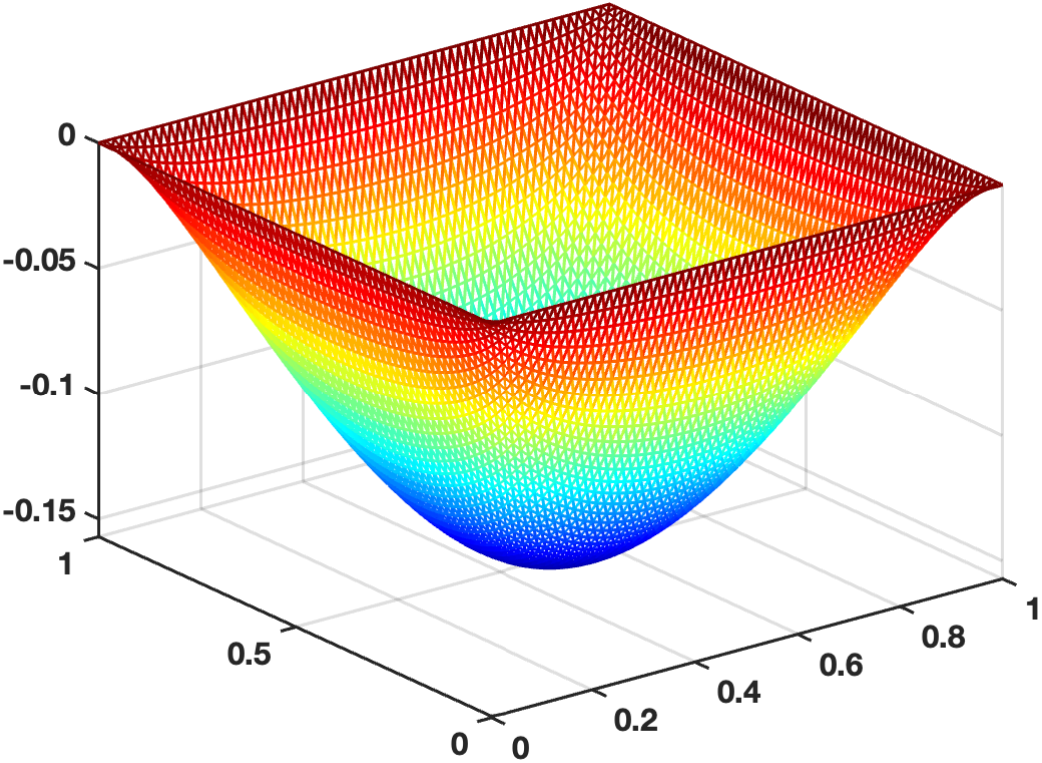}
   \caption{Plot of the computed viscosity solution using quadratic element with $\epsilon=0.005$ and $h = 1/64$ in 2D.  }
   \label{fig:vissol}
\end{figure}

\subsection{Three dimensional numerical experiments}
In this subsection, we present the numerical result in three dimensional case. 

\subsubsection{Numerical example IV}

In this example, we investigate the approximation of the perturbed equation \eqref{equ: model equations} to the fully nonlinear  Monge-Amp\`{e}re equation \eqref{equ: MA equation}. We select $f(x,y,z) = (1+x^2+y^2+z^2)e^{(x^2+y^2+z^2)/2}$ and $g(x,y,z) = e^{(x^2+y^2+z^2)/2}$ to fit the unique classical solution  $u(x,y,z) =  e^{(x^2+y^2+z^2)/2}$.

In the numerical experiment,  we test the convergence rate of $u_h^{\epsilon}$ to $u$ for fixed $h$ and variable $\epsilon$.  Table \eqref{tab:varepsilon3d} displays the numerical results for quadratic and cubic element on uniform tetrahedral triangulation on the unit cube. Similar to two dimensional case, we can observe that $\|u_h^{\epsilon} - u\|_{L^2(\Omega)} \approx \mathcal{O}(\epsilon)$, $\|u_h^{\epsilon} - u\|_{L^2(\Omega)} \approx \mathcal{O}(\epsilon^{0.75})$, $\|u_h^{\epsilon} - u\|_{L^2(\Omega)} \approx \mathcal{O}(\epsilon^{0.25})$ for both quadratic element and cubic element.

\begin{table}[htb!]
\centering
\caption{Numerical errors of numerical test IV with fixed $h = 0.05$.  }\label{tab:varepsilon3d}
\resizebox{0.99\textwidth}{!}{
\begin{tabular}{|c|c|c|c|c|c|c|c|c|c|}
\hline 
Degree &$\epsilon$ & $\|u-u^h\|_{L^2(\Omega)} $ & Order & $\|u-u^h\|_{H^1(\Omega)}$&Order  & $\|u-u^h\|_{H^2(\mathcal{T}_h)}$ & Order \\ \hline
\multirow{8}{*}{k=2}   
 & 5.00e-01 &1.25e-01& -- &7.23e-01& -- &4.67e+00& -- \\ \cline{2-8}
 & 2.50e-01 &9.17e-02&0.45&5.43e-01&0.41&4.08e+00&0.20\\ \cline{2-8}
 & 1.25e-01 &6.94e-02&0.40&4.24e-01&0.36&3.68e+00&0.15\\ \cline{2-8}
 & 5.00e-02 &3.86e-02&0.64&2.60e-01&0.53&3.07e+00&0.20\\ \cline{2-8}
 & 2.50e-02 &2.24e-02&0.79&1.69e-01&0.62&2.62e+00&0.23\\ \cline{2-8}
 & 1.25e-02 &1.24e-02&0.85&1.08e-01&0.65&2.19e+00&0.26\\ \cline{2-8}
 & 5.00e-03 &5.44e-03&0.90&5.82e-02&0.67&1.63e+00&0.32\\ \cline{2-8}
 & 2.50e-03 &2.84e-03&0.94&3.58e-02&0.70&1.23e+00&0.40\\ \hline
 \multirow{8}{*}{k=3}   
 & 5.00e-01 &1.49e-01& -- &8.74e-01& -- &6.22e+00& -- \\ \cline{2-8}
 & 2.50e-01 &1.07e-01&0.48&6.46e-01&0.44&5.42e+00&0.20\\ \cline{2-8}
 & 1.25e-01 &7.77e-02&0.45&4.90e-01&0.40&4.83e+00&0.16\\ \cline{2-8}
 & 5.00e-02 &4.16e-02&0.68&2.93e-01&0.56&3.99e+00&0.21\\ \cline{2-8}
 & 2.50e-02 &2.38e-02&0.80&1.89e-01&0.63&3.40e+00&0.23\\ \cline{2-8}
 & 1.25e-02 &1.31e-02&0.86&1.20e-01&0.66&2.86e+00&0.25\\ \cline{2-8}
 & 5.00e-03 &5.78e-03&0.90&6.43e-02&0.68&2.23e+00&0.27\\ \cline{2-8}
 & 2.50e-03 &3.05e-03&0.92&3.98e-02&0.69&1.81e+00&0.30\\ \hline
\end{tabular}}
\end{table}

\subsection{Numerical example V}

In the numerical example, we test the the rate of convergence of $u-u_h$  for fixed $\epsilon$ in this dimensional case. For this purpose, we replace the boundary condition  $\Delta u^{\epsilon} = \epsilon $ of  \eqref{equ: model equations} by $\Delta u^{\epsilon} = \psi^{\epsilon} $. We choose $f^{\epsilon} = 36x^2z^2-24\epsilon$, $g^{\epsilon} = \frac{1}{2}(x^4+y^2+z^4)$, and $\psi^{\epsilon} = 1+6x^2 + 6z^2$. It is easy to verify that  the exact solution is $u^{\epsilon} = \frac{1}{2}(x^4+z^2+y^4)$.

In this numerical test, we select $\epsilon = 0.005$ and the numerical results are reported in Table \ref{tab:varh3d}.  In term of the error of $u^{\epsilon} - u^{\epsilon}_h$ in $H^2$ norm, we can observe otpimal order of $\mathcal{O}(h^k)$ for both quadratic and cubic elements. Similarly to two dimensional case, we can only observe $\mathcal{O}(h^2)$ convergence for the $L^2$ and $H^1$ error of $u^{\epsilon} - u^{\epsilon}_h$   for cubic element. What stands out in the table is that we can observe $\mathcal{O}(h^3)$ order convergence for the $L^2$ error of  $u^{\epsilon} - u^{\epsilon}_h$ for the quadratic element.

\begin{table}[htb!]
\centering
\caption{Numerical errors of numerical test V with fixed $\epsilon  = 0.01$.  }\label{tab:varh3d}
\resizebox{0.99\textwidth}{!}{
\begin{tabular}{|c|c|c|c|c|c|c|c|c|c|}
\hline 
Degree &$h$ & $\|u-u^h\|_{L^2(\Omega)} $ & Order & $\|u-u^h\|_{H^1(\Omega)}$&Order  & $\|u-u^h\|_{H^2(\mathcal{T}_h)}$ & Order \\ \hline
\multirow{4}{*}{k=2}  
 & 1/3 &2.07e-03&--&4.10e-02&--&9.38e-01&--\\ \cline{2-8}
 & 1/6 &2.61e-04&2.98&1.02e-02&2.00&4.71e-01&0.99\\ \cline{2-8}
 & 1/12 &3.45e-05&2.92&2.55e-03&2.01&2.36e-01&1.00\\ \cline{2-8}
 & 1/24 &5.37e-06&2.68&6.36e-04&2.00&1.18e-01&1.00\\ \hline
\multirow{4}{*}{k=3}  
 & 1/3 &1.40e-04&--&2.40e-03&--&7.17e-02&--\\ \cline{2-8}
 & 1/6 &1.43e-05&3.30&3.61e-04&2.73&1.75e-02&2.04\\ \cline{2-8}
 & 1/12 &2.55e-06&2.49&5.04e-05&2.84&4.35e-03&2.01\\ \cline{2-8}
 & 1/24 &6.12e-07&2.06&7.16e-06&2.82&1.09e-03&2.00\\ \hline
\end{tabular}}
\end{table}

\subsubsection{Numerical example VI}
 In the numerical example, we test the capability of the proposed methods for computing viscosity solution in three dimensional case.  Similar to \cite{feng2009vanishing},  we choose $f=1$ and $g=0$. 
As demonstrated in \cite{guti2016ma}, the  Monge-Amp\`{e}re equation \eqref{equ: MA equation} admits a unique viscosity solution but does not have a classical solution.  We simulated the numerical solution using quadratic element with $\epsilon = 0.005$ and $h = 1/32$.  In Figure \ref{fig:slice}, we plot the $x$-slices (left graph) and $y$-slices (right graph) of the computed solution. Again,  the proposed numerical methods can track the viscosity convex solution.


\begin{figure}[!h]
\captionsetup[subfigure]{}
   \centering
   \subcaptionbox{ \label{fig:xslice}}
  {\includegraphics[width=0.49\textwidth]{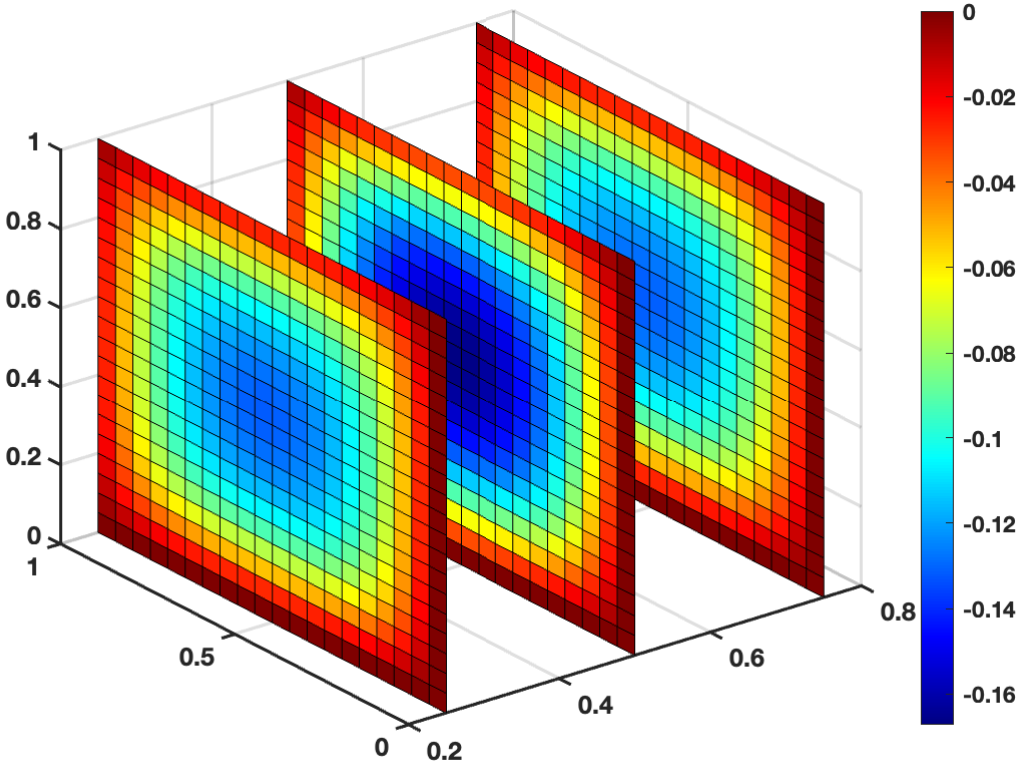}}
  \subcaptionbox{\label{fig:yslice}}
   {\includegraphics[width=0.49\textwidth]{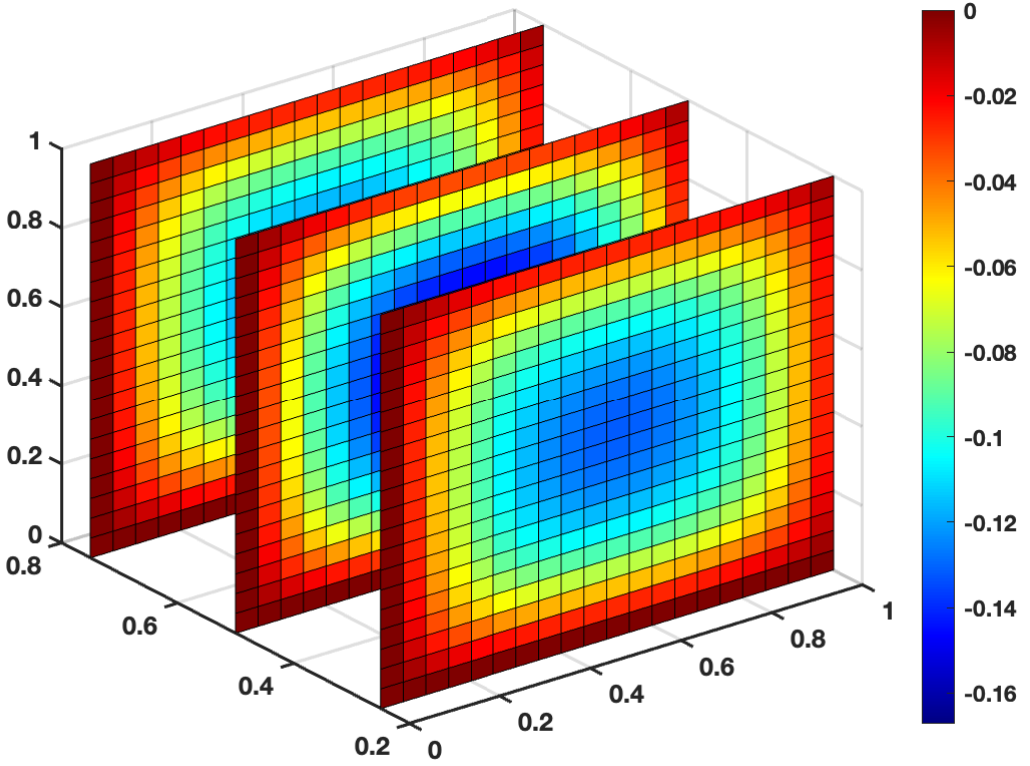}}
   \caption{Plot of the computed viscosity solution using quadratic element with $\epsilon=0.005$ and $h = 1/32$ in 3D: (a) $x$-slices at $x =0.25, 0.5, 0.75$; (b) $y$-slices at $y =0.25, 0.5, 0.75$.}\label{fig:slice}
\end{figure}

\section{Conclusion} \label{sec:con}
In this paper, we propose and analyze a new C0IP method for computing the viscosity solution of the fully nonlinear Monge-Amp\`{e}re equations. The key idea is to apply the discrete Miranda-Talenti estimate. We prove optimal error estimates in $H^2$-norm. A series of benchmark examples are provided to demonstrate the theoretical results. 

\section*{Acknowledgments}
 This work was supported in part by the Andrew Sisson Fund, Dyason Fellowship, the Faculty Science Researcher Development Grant of the University of Melbourne, and the NSFC grant 12131005.

\bibliographystyle{siamplain}
\bibliography{references}

\end{document}